\documentclass[11pt,a4paper]{article}
\usepackage{amsmath, amsfonts, amsthm, latexsym, amscd}
\usepackage{amsmath}
\usepackage{xcolor,graphicx,hyperref}
\usepackage{amsfonts}
\usepackage{amssymb}
\usepackage[all]{xy}

\usepackage[norefs,nocites]{refcheck}

\allowdisplaybreaks[4]

\newtheorem{theorem}{Theorem}[section]
\newtheorem{proposition}[theorem]{Proposition}
\newtheorem{corollary}[theorem]{Corollary}
\newtheorem{example}[theorem]{Example}

\newtheorem{remark}[theorem]{Remark}

\newtheorem{lemma}[theorem]{Lemma}
\newtheorem{final remark}[theorem]{Final Remark}
\newtheorem{definition}[theorem]{Definition}

\textwidth=16.3cm  \textheight=23cm
\hoffset=-15mm     \voffset=-20mm


\begin{document}

\title{An injective-type norm and integral bilinear forms\\ defined by sequence classes}
\author{Jamilson R. Campos\,, Lucas Nascimento\thanks{Supported by a CAPES scholarship.}\, and Luiz Felipe P. Sousa \thanks{Supported by a CAPES scholarship. \newline 2020 Mathematics Subject Classification: 47L20, 46B45, 47B37, 46B10, 46A32, 47A07.\newline Keywords: Sequence spaces, tensor norms, operator ideals, integral bilinear forms.}}
\date{}
\maketitle

\begin{abstract}
In this work we define a class of injective-type norm on tensor products through the environment of sequence classes. Examples and results on this norm will be presented and the duality is studied in this context. As a byproduct, we present the definition of the associated integral-type bilinear forms and also a tensor characterization for a class of sequence spaces.
\end{abstract}

\section{Introduction}
The study of the duals $(E \hat{\otimes}_\alpha F)'$ (here $E$ and $F$ are Banach spaces and $\alpha$ is a reasonable crossnorm) as classes of linear operators or bilinear forms is a fundamental part in the theory of tensor products (see \cite{A.Defant, R. Ryan}) and establishes its close relationship with the parallel theory of operator ideals (see \cite{handbook,A.Pietsch} and \cite[6.3]{history}). Issues such as the maximality and minimality of ideals and the approximation property, for example, are involved in this scope and recent developments in these subjects can be seen in \cite{achour2018, sheldon, maite, maite2020, samuel, kim2020, kim,J. A. Lopez Molina.tres,turcovillafane}.

A number of important operator ideals are defined, or characterized, by the transformation of vector-valued sequences and a unifying approach to this kind of operators ideals was proposed in \cite{G. Botelho} using the concept of Sequence Classes (cf. Definition \ref{defseqcl}). For sequence classes $X$ and $Y$, a linear operator $T \colon E \longrightarrow F$ is $(X;Y)$-summing if $T((x_j))_{j=1}^\infty \in Y(F)$ whenever $(x_j)_{j=1}^\infty \in X(E)$ and the Banach operator ideal of such operators is denoted by ${\cal L}_{X;Y}$. This approach has shown to be quite fruitful (see \cite{complut, Jamilson.dual, espaco.mid, G. Botelho and D. Freitas, raquel, J.R.Campos.J.Santos, J. Ribeiro and F. Santos, J. Ribeiro and F. Santos.dois}) and the maximality of the Banach operator ideals of the type ${\cal L}_{X;Y}$ was recently studied, using a tensor approach, in \cite{BCN}.

It is well known that since the injective norm $\varepsilon$ on $E \otimes F$ is smaller than the projective norm $\pi$, every bounded linear functional on $E \hat{\otimes}_\varepsilon F$ is the linearization of a unique bounded bilinear form on $E \times F$. A complete description of the dual space of $E \hat{\otimes}_\varepsilon F$ leads to the definition of Integral Bilinear Forms and Integral Operators (see \cite[Section 3.4]{R. Ryan}). The purpose of this paper is to define and study a class of injective-type norms on tensor products, which generalizes the norm $\varepsilon$ in a certain sense, using the environment of sequence classes. We establish necessary results on these norms as well as present the duality theory for such normed tensor products. As an application, we present in the last section of the paper a tensor characterization for the space $(\ell_p^{\rm mid})^u(E)$.

Banach spaces over $\mathbb{K} = \mathbb{R}$ or $\mathbb{C}$ are denoted by $E$ and $F$. The symbol $E \stackrel{1}{\hookrightarrow} F$ means that $E$ is a linear subspace of $F$ and $\|x\|_F \leq \|x\|_E$ for every $x \in E$; and $E\stackrel{1}{=}F$ means that $E$ is isometrically isomorphic to $F$. We denote by $E'$ the topological dual of E, $B_E$ denotes the closed unit ball of $E$ and ${\cal L}(E; F)$ the Banach space of bounded linear operators from $E$ to $F$ with the usual operator norm. For $x \in E$ and $j \in \mathbb{N}$, the symbol $x\cdot e_j$ denotes the sequence $(0,\ldots, 0,x,0, 0,\ldots ) \in E^\mathbb{N}$, where $x$ is placed at the $j$-th coordinate. The symbol $(x_{j})_{j=1}^{n}$, where $x_1, \ldots, x_n \in E$, stands for the sequence $(x_{1},x_{2},\ldots,x_{n},0,0,\ldots) \in E^\mathbb{N}$. By $\widehat{T}$ we mean the induced operator $$\widehat{T} \colon E^\mathbb{N} \longrightarrow F^\mathbb{N}~,~\widehat{T}((x_j)_{j=1}^\infty) = (T(x_j))_{j=1}^\infty.$$ Restrictions of $\widehat{T}$ to subspaces of $E^{\mathbb{N}}$ are still denoted by $\widehat{T}$. The other notations and symbols used here are  usual in Functional Analysis.

The theory, nomenclature and symbology on the sequence class environment are present in \cite{G. Botelho} and the additional associated elements in \cite{Jamilson.dual,BCN,Ariel}. However, for the convenience of the reader, we will present below the definitions and essential tools on this topic.

\begin{definition}\rm \label{defseqcl}
According to \cite{G. Botelho}, a {\it Sequence Class} is a rule $X$ that assigns to each Banach space $E$ a Banach space $X(E)$ of $E$-valued sequences, that is $X(E)$ is a vector subspace of $E^{\mathbb{N}}$ with the coordinatewise operations, such that:\\
(i) $c_{00}(E) \subseteq X(E) \stackrel{1}{\hookrightarrow}  \ell_\infty(E)$ for every Banach space $E$,\\
(ii) $\|x \cdot e_j\|_{X(E)}= \|x\|_E$  for every Banach space $E$, every $x \in E$ and every $j \in \mathbb{N}$.
\end{definition}

To avoid ambiguities we shall eventually denote a sequence classe $X$ by $X(\cdot)$. For example, we write just $\ell_p^w$ instead of $\ell_p^w(\cdot)$, but we write $\ell_p(\cdot)$ so that there is no confusion with the scalar sequence space $\ell_p$.

Given sequence classes $X$ and $Y$, we say that an operator $T \in {\cal L}(E;F)$ is {\it $(X;Y)$-summing} if $T((x_j))_{j=1}^\infty \in Y(F)$ whenever $(x_j)_{j=1}^\infty \in X(E)$. In this case, the induced linear operator $\widehat{T} \colon X(E) \longrightarrow Y(F)$, is continuous and $\|T \|_{X;Y} : = \|\widehat{T}\|$ is a norm that makes the space ${\cal L}_{X;Y}(E;F)$ of $(X;Y)$-summing operators a Banach space. Whenever we refer to ${\cal L}_{X;Y}(E;F)$ we assume that it is endowed with the norm $\|\cdot \|_{X;Y}$. 

A sequence class $X$ is {\it linearly stable} if, regardless of the Banach spaces $E$ and $F$, every operator $T \in {\cal L}(E;F)$ is $(X;X)$-summming and $\|T\|_{X;X} = \|T\|$, that is, $\mathcal{L}_{X;X}(E;F)\stackrel{1}{=} \mathcal{L}(E;F)$. If the sequence classes $X$ and $Y$ are linearly stable and $X(\mathbb{K}) \stackrel{1}{\hookrightarrow} Y(\mathbb{K})$, then $\mathcal{L}_{X;Y}$ is a Banach operator ideal \cite[Theorem 3.6]{G. Botelho}.

We need to recall two more definitions from \cite{G. Botelho} and \cite{Jamilson.dual}, respectively. A sequence class $X$ is said to be:\\
\noindent (i) \emph{finitely determined} if for all sequence $(x_j)_{j=1}^\infty \in E^{\mathbb{N}}$, it follows that $(x_j)_{j=1}^\infty \in X(E)$ if and only if $\sup_k \|(x_j)_{j=1}^k  \|_{X(E)} < +\infty$ and, in this case, $\|(x_j)_{j=1}^\infty  \|_{X(E)} = \sup_k \|(x_j)_{j=1}^k  \|_{X(E)}. $\\
\noindent (ii) {\it spherically complete} if $(\lambda_jx_j)_{j=1}^\infty \in X(E)$ and $\|(\lambda_jx_j)_{j=1}^\infty \|_{X(E)} = \|(x_j)_{j=1}^\infty\|_{X(E)}$ whenever $(x_j)_{j=1}^\infty \in X(E)$ and $(\lambda_j)_{j=1}^\infty \in \mathbb{K}^\mathbb{N}$ is such that $|\lambda_j| = 1$ for every $j$.

\begin{example}\rm Let $1 \le p < \infty$ and $p^*$ defined by $1/p + 1/{p^*} = 1$, where we set $p^* = \infty$ if $p=1$. The following rules define spherically complete and linearly stable sequence classes, endowed with their usual norms: \\
$\bullet$ The class $E \mapsto c_0(E)$ of norm null sequences.\\
$\bullet$ The class $E \mapsto \ell_\infty(E)$ of bounded sequences.\\
$\bullet$ The class $E \mapsto \ell_p(E)$ of absolutely $p$-summable sequences.\\
$\bullet$ The class $E \mapsto \ell_p^w(E)$ of weakly $p$-summable sequences \cite[p.\,32]{J.Diestel}:
$$\ell_p^w(E) :=  \left\{(x_{j})_{j=1}^{\infty} \in E^{\mathbb{N}} : \|(x_j)_{j=1}^\infty\|_{w,p} = \sup_{\varphi \in B_{E'}}\|(\varphi(x_j))_{j=1}^\infty\|_p < \infty\right\}.$$
$\bullet$ The class $E \mapsto \ell_p^u(E)$ of unconditionally $p$-summable sequences \cite[Section 8.2]{A.Defant}:
$$\ell_p^u(E) :=  \left\{(x_{j})_{j=1}^{\infty} \in \ell_p^w(E) : \lim_{k \to \infty}\|(x_j)_{j=k}^\infty\|_{w,p} \to 0\right\},$$ with the norm inherited from $\ell_p^w(E)$.\\
\noindent $\bullet$ The class $E \mapsto \ell_p\langle E \rangle$ of Cohen strongly $p$-summable sequences \cite{cohen73}:
$$\ell_p\langle E \rangle :=  \left\{(x_{j})_{j=1}^{\infty} \in E^{\mathbb{N}} : \|(x_j)_{j=1}^\infty\|_{\ell_p\langle E \rangle}= \sup_{(\varphi_j)_{j=1}^\infty \in B_{\ell_{p^*}^w(E')}} \|(\varphi_j(x_j))_{j=1}^\infty\|_1 < \infty\right\}.$$

\noindent $\bullet$ The class $E \mapsto \ell_p^{\rm mid}(E)$  of mid $p$-summable sequences \cite{espaco.mid}:
\begin{equation}\label{defmid}
\ell_p^{\rm mid}(E) =  \left\{(x_{j})_{j=1}^{\infty} \in E^{\mathbb{N}} : \|(x_{j})_{j=1}^{\infty}\|_{p,{\rm mid}} = \sup\limits_{(\varphi_n)_{n=1}^\infty \in B_{\ell_p^w(E')}} \left(\sum_{n = 1}^\infty\sum_{j = 1}^\infty |\varphi_n(x_j)|^p \right)^{1/p} < \infty\right\}.
\end{equation}

\end{example}
\noindent All sequence classes presented above are finitely determined except $c_0(\cdot)$ and $\ell_p^u$.

The dual of a sequence class $X$ was introduced in \cite{Jamilson.dual} in the following fashion:
\begin{equation*}
	X^{\rm dual}(E) = \left\{(x_j)_{j=1}^\infty\ \mathrm{in\ } E: \sum_{j=1}^\infty \varphi_j(x_j)\ \mathrm{converges\ } \text{for every}\ (\varphi_j)_{j=1}^\infty\ \mathrm{in\ } X(E')\right\}.
\end{equation*}
If $X$ is a linearly stable and spherically complete sequence class, the expression
\begin{equation*}
\left\|(x_{j})_{j=1}^{\infty} \right\|_{X^{\rm dual}}:= \sup_{(\varphi_{j})_{j=1}^{\infty}\in B_{X(E')} }\sum_{j=1}^{\infty}\left|\varphi_{j}(x_{j}) \right| = \sup_{(\varphi_{j})_{j=1}^{\infty}\in B_{X(E')} }\left|\sum_{j=1}^{\infty}\varphi_{j}(x_{j}) \right|
\end{equation*}
makes $X^{\rm dual}(E)$ a Banach space and $X^{\rm dual}$ a finitely determined, linearly stable and spherically complete sequence class (see \cite[Proposition 2.5]{Jamilson.dual}).

\begin{example}\rm
For $1 \leq p \leq \infty$, we have $(\ell_p^w)^{\rm dual} = \ell_{p^*}\langle \cdot \rangle$ and $(\ell_p)^{\rm dual} = \ell_{p^*}( \cdot )$. In particular, $(\ell_\infty)^{\rm dual} = \ell_{1}( \cdot )$ \cite[Example 2.6(d)]{Jamilson.dual}, which is somewhat surprising because $(\ell_\infty)' \neq \ell_1$.
\end{example}

\section{Dual classes}

In \cite{Jamilson.dual}, conditions on a sequence class $X$ were established so that we have $X^{\rm dual}(E') \stackrel{1}{=}X(E)'$, by the application \begin{align}\label{opdual} J\colon X^{\rm dual}(E') \longrightarrow X(E)'~,~
	J\left((\varphi_j)_{j=1}^\infty\right)\left((x_j)_{j=1}^\infty\right) = \sum_{j=1}^\infty \varphi_j(x_j).
\end{align} 
More specifically, this holds (\cite[Theorem 2.10]{Jamilson.dual}) if $X$ is \emph{dual-representable}, i.e., $X$ is linearly stable, finitely determined, spherically complete, finitely injective and such that $c_{00}(E)$ is dense in $X(E)$. By the way, a sequence class $X$ is said to be {\it injective} if \begin{equation*}
	\|(x_j)_{j=1}^\infty\|_{X(E)} \leq \|(i(x_j))_{j=1}^\infty\|_{X(F)}
\end{equation*}
whenever $i \colon E \longrightarrow F$ is a metric injection and $(x_j)_{j=1}^\infty \in E^\mathbb{N}$ is a sequence that fulfills  $(i(x_j))_{j=1}^\infty \in X(F)$. If we have only $	\|(x_j)_{j=1}^k\|_{X(E)} \leq \|(i(x_j))_{j=1}^k\|_{X(F)}$ for $k \in \mathbb{N}$ and $x_1, \ldots, x_k \in E$, we say that $X$ is {\it finitely injective}. 

\begin{example}\rm
The sequence classes $c_0(\cdot), \ell_\infty(\cdot), \ell_p(\cdot), \ell_p^w$ and $\ell_p^u$ are injective.
\end{example}

Some facts presented below are immediate consequences of the involved definitions.

\begin{proposition}\label{injectivefacts} Let $X$ be a sequence class. It follows that:
	\begin{description}
		\item[a)] If $X$ is injective, then it is also finitely injective;
		\item[b)] If $X$ is finitely injective and finitely determined, then it is also injective;
		\item[c)] If $X$ is linearly stable and injective, then $\|\cdot\|_{X(G)} = \|\cdot\|_{X(E)} \ {\rm in }  \ X(G)$ regardless of the subspace $G$ of $E$.
	\end{description}
\end{proposition}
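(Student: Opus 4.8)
The plan is to dispatch the three items separately, each as a direct unwinding of the definitions, so I do not anticipate a genuine obstacle; the only point that wants a little care is the passage between finite truncations and infinite sequences in item (b), and it is there that the hypotheses really get used. For (a), I would simply feed finitely supported sequences into the definition of injectivity: given a metric injection $i\colon E\to F$ and vectors $x_1,\dots,x_k\in E$, the truncation $(x_j)_{j=1}^k$ lies in $c_{00}(E)\subseteq X(E)$ and its image $(i(x_j))_{j=1}^k$ lies in $c_{00}(F)\subseteq X(F)$ by axiom (i) of Definition~\ref{defseqcl}, so the requirement ``$(i(x_j))_{j=1}^\infty\in X(F)$'' in the definition of injectivity is satisfied automatically, and the inequality it provides reads exactly $\|(x_j)_{j=1}^k\|_{X(E)}\le\|(i(x_j))_{j=1}^k\|_{X(F)}$, which is finite injectivity.

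For (b), assuming $X$ is finitely injective and finitely determined, I would fix a metric injection $i\colon E\to F$ and a sequence $(x_j)_{j=1}^\infty\in E^{\mathbb N}$ with $(i(x_j))_{j=1}^\infty\in X(F)$, and argue as follows. Finite injectivity gives $\|(x_j)_{j=1}^k\|_{X(E)}\le\|(i(x_j))_{j=1}^k\|_{X(F)}$ for every $k$; the finitely-determined property in $X(F)$ bounds the right-hand side by $\|(i(x_j))_{j=1}^\infty\|_{X(F)}<\infty$; hence $\sup_k\|(x_j)_{j=1}^k\|_{X(E)}<\infty$, and the finitely-determined property now applied in $X(E)$ yields $(x_j)_{j=1}^\infty\in X(E)$ together with $\|(x_j)_{j=1}^\infty\|_{X(E)}=\sup_k\|(x_j)_{j=1}^k\|_{X(E)}\le\|(i(x_j))_{j=1}^\infty\|_{X(F)}$. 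This is precisely injectivity, and it is the step where ``finitely determined'' is what converts the truncation estimates on both sides into an estimate for the full sequences; I expect this to be the only place where one has to be a bit careful.

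For (c), let $G$ be a (closed) subspace of $E$ and let $\iota\colon G\hookrightarrow E$ be the inclusion map, which is a metric injection with $\|\iota\|=1$. On the one hand, linear stability makes $\widehat\iota\colon X(G)\to X(E)$ a bounded operator with $\|\widehat\iota\|=\|\iota\|_{X;X}=\|\iota\|=1$, so in particular $X(G)\subseteq X(E)$ and $\|(x_j)_{j=1}^\infty\|_{X(E)}\le\|(x_j)_{j=1}^\infty\|_{X(G)}$ for every $(x_j)_{j=1}^\infty\in X(G)$. On the other hand, injectivity applied to $\iota$ gives the reverse inequality $\|(x_j)_{j=1}^\infty\|_{X(G)}\le\|(x_j)_{j=1}^\infty\|_{X(E)}$. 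Combining these yields $\|\cdot\|_{X(G)}=\|\cdot\|_{X(E)}$ throughout $X(G)$, which is the assertion.
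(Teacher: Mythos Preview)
Your argument is correct and is precisely the direct unwinding of the definitions that the paper has in mind; the paper itself does not write out a proof of this proposition, stating only that the three items are immediate consequences of the definitions involved, and your treatment of each item---especially the use of ``finitely determined'' on both sides in (b) to pass from truncations to the full sequence---is the natural way to fill in the details.
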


The next example shows that, even though it is not possible to use the result  \cite[Theorem 2.10]{Jamilson.dual} to obtain the duality $X^{\rm dual}(E') \stackrel{1}{=}X(E)'$, there may exist a sequence class $X'$ such that $X'(E') \stackrel{1}{=}X(E)'$.

\begin{example}\label{exnew}\rm For $1 < p < \infty$, the sequence class $X=\ell_{p}\langle \cdot \rangle$ is not injective (this is not difficult to prove and is done in \cite[Proposição 4.4.17]{Davidson}) and since it is finitely determined, from Proposition \ref{injectivefacts} b), the only option left is that it is not finitely injective too. So we cannot use the result \cite[Theorem 2.10]{Jamilson.dual} to obtain the isomorphism in \eqref{opdual} for the sequence class $X^{\rm dual}$. However, as ${\cal L}(\ell_{p^*};E) \stackrel{1}{=} \ell_{p}^w(E)$ (see \cite[Proposition 2.2]{J.Diestel}) and $\ell_{p}\langle E \rangle \stackrel{1}{=} E \widehat{\otimes}_\pi \ell_p$ (see \cite[Section 2]{Bu-Diestel}), we have
	\[\ell_{p^*}^w(E') \stackrel{1}{=} {\cal L}(\ell_{p};E') \stackrel{1}{=} (\ell_{p} \widehat{\otimes}_\pi E)' \stackrel{1}{=} (\ell_{p} \langle E \rangle)'  \]
where the second isomorphism comes from tensor product theory (see \cite[Section 2.2]{R. Ryan}).
\end{example}

Motivated by the above circumstance, we are invited to explore sequence classes of type $X'$ for which, as the dual $X^{\rm dual}$ of a class $X$, provide an isomorphism as in \eqref{opdual}. We will refer to such a sequence class as a \emph{dual class for $X$}.

\begin{definition}\rm
	Let $X$ be a sequence class. We say that a sequence class $X'$ is a \emph{dual class for} $X$ if $X'(E')\stackrel{1}{=}X(E)'$ for any Banach space $E$, by the application
	\begin{equation}\label{isodual}
		\Psi: X'(E')\longrightarrow X(E)'\ ,\ \Psi\left((\varphi_{j})_{j=1}^{\infty} \right) \left((x_{j})_{j=1}^{\infty} \right)  = \sum_{j=1}^{\infty}\varphi_{j}(x_{j}), 
	\end{equation}
	for all $(\varphi_{j})_{j=1}^{\infty} \in X'(E')$ and all $(x_{j})_{j=1}^{\infty} \in X(E)$.
\end{definition}

\begin{example}\label{exduals}\rm a) The sequence class $\ell_{1}(\cdot)$ is a dual class for $c_{0}(\cdot)$ and, for $1\leq p< \infty$, $\ell_{p^{\ast}}(\cdot)$ is a dual class for  $\ell_{p}(\cdot)$.\\
\noindent b) For $1\leq p< \infty$, $\ell_{p^{\ast}}^{w}$ is a dual class for $\ell_{p}\langle\cdot\rangle$ (cf. Example \ref{exnew}) and  $\ell_{p^{\ast}}\langle\cdot\rangle$ is a dual class for $\ell_{p}^{u}$ (see \cite[Proposition 6.]{bu2005}).\\
\noindent c) If $M$ is an  Orlicz function (see \cite[Chapter 4]{lt1}) such that $M(1)=1$, it is not difficult to show that the rules $E \mapsto \ell_M(E):=\left\{(x_j)_{j=1}^\infty  \in E^{\mathbb{N}} : (\|x_j\|)_{j=1}^\infty \in \ell_{M}\right\}$ and $E \mapsto h_M(E):=\left\{(x_j)_{j=1}^\infty  \in E^{\mathbb{N}} : (\|x_j\|)_{j=1}^\infty \in h_{M}\right\}$ define sequence classes and both are spherically complete, linearly stable, finitely determined and injective. Furthermore, if $M$ satisfies the $\Delta_2$-condition at zero, then  $\ell_{M^*}(\cdot)$ is a dual class for $\ell_{M}(\cdot)$, where  $M^*$ is the function complementary to $M$. We refer to \cite{Ariel,Gupta} for more information on Orlicz vector-valued sequence spaces. 
\end{example}

\begin{definition}\rm
	We say that a sequence class $X$ has the \emph{extension property} if for any Banach space $E$, every $M$ subspace of $E$ and each $(\varphi_{j})_{j=1}^{\infty}\in X(M')$, there exists $(\widetilde{\varphi}_{j})_{j=1}^{\infty}\in X(E')$ such that  $\|(\widetilde{\varphi}_{j})_{j=1}^{\infty} \|_{X(E')} \le \|(\varphi_{j})_{j=1}^{\infty} \|_{X(M')}$ and $\varphi_{j}(x)= \widetilde{\varphi}_{j}(x)$
	for all $x\in M$ and all $j\in \mathbb{N}$.
\end{definition}

\begin{example}\rm
The sequence classes $c_{0}(\cdot), h_M(\cdot), \ell_M(\cdot)$ and $\ell_{p}(\cdot)$, for $1\leq p\leq \infty$, have the extension property as an immediate consequence of the Hahn-Banach extension theorem. 
\end{example} 

\begin{proposition}
	If $X$ is a linearly stable and injective sequence class, then every dual class $X'$ for $X$ has the extension property.
\end{proposition}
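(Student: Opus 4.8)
The plan is to reduce the extension property for $X'$ to the Hahn--Banach theorem, transported back and forth through the isometric identifications provided by the dual-class hypothesis. Fix a Banach space $E$, a subspace $M\subseteq E$ and a sequence $(\varphi_j)_{j=1}^\infty\in X'(M')$. The first step is to observe that the inclusion $\iota\colon M\hookrightarrow E$ is a metric injection, so that combining linear stability of $X$ (which gives $\|\widehat{\iota}\|\le\|\iota\|=1$, hence $\|(x_j)_{j=1}^\infty\|_{X(E)}\le\|(x_j)_{j=1}^\infty\|_{X(M)}$ for $(x_j)_{j=1}^\infty\in X(M)$) with injectivity of $X$ (which gives the reverse inequality $\|(x_j)_{j=1}^\infty\|_{X(M)}\le\|(\iota(x_j))_{j=1}^\infty\|_{X(E)}$), the induced map $\widehat{\iota}\colon X(M)\longrightarrow X(E)$ is an isometric embedding; this is exactly the content of Proposition \ref{injectivefacts} c). Thus I may and will regard $X(M)$ as a closed subspace of $X(E)$, a sequence of elements of $M$ having the same norm whether computed in $X(M)$ or in $X(E)$, and in particular $c_{00}(M)\subseteq X(M)\subseteq X(E)$.

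Next, applying the hypothesis that $X'$ is a dual class for $X$ to the Banach space $M$, the functional $\phi:=\Psi\bigl((\varphi_j)_{j=1}^\infty\bigr)\in X(M)'$ satisfies $\|\phi\|_{X(M)'}=\|(\varphi_j)_{j=1}^\infty\|_{X'(M')}$ and $\phi\bigl((x_j)_{j=1}^\infty\bigr)=\sum_{j=1}^\infty\varphi_j(x_j)$ by \eqref{isodual}. By the Hahn--Banach theorem I extend $\phi$ to a functional $\widetilde{\phi}\in X(E)'$ with $\|\widetilde{\phi}\|_{X(E)'}=\|\phi\|_{X(M)'}$. Now I apply the dual-class hypothesis to $E$: since $\Psi\colon X'(E')\longrightarrow X(E)'$ is a surjective isometry, there is $(\widetilde{\varphi}_j)_{j=1}^\infty\in X'(E')$ with $\Psi\bigl((\widetilde{\varphi}_j)_{j=1}^\infty\bigr)=\widetilde{\phi}$ and
$$\|(\widetilde{\varphi}_j)_{j=1}^\infty\|_{X'(E')}=\|\widetilde{\phi}\|_{X(E)'}=\|\phi\|_{X(M)'}=\|(\varphi_j)_{j=1}^\infty\|_{X'(M')},$$
which is precisely the norm estimate demanded by the extension property.

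It remains to check that $(\widetilde{\varphi}_j)_{j=1}^\infty$ restricts coordinatewise to $(\varphi_j)_{j=1}^\infty$ on $M$. For an arbitrary $(x_j)_{j=1}^\infty\in X(M)\subseteq X(E)$ I would compute, using that $\widetilde{\phi}$ extends $\phi$ and the formula \eqref{isodual} for $\Psi$ over $X'(E')$ and over $X'(M')$,
$$\sum_{j=1}^\infty\widetilde{\varphi}_j(x_j)=\widetilde{\phi}\bigl((x_j)_{j=1}^\infty\bigr)=\phi\bigl((x_j)_{j=1}^\infty\bigr)=\sum_{j=1}^\infty\varphi_j(x_j).$$
Specializing to $x\cdot e_j\in c_{00}(M)$, for an arbitrary $x\in M$ and $j\in\mathbb{N}$, both series collapse to a single term and yield $\widetilde{\varphi}_j(x)=\varphi_j(x)$, as required. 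The one point that genuinely needs care — and the natural candidate for the "hard part" — is the opening step: one must use injectivity \emph{together with} linear stability to upgrade the contraction $\widehat{\iota}$ to an isometric embedding, so that the Hahn--Banach extension is performed inside the correct ambient space $X(E)$; once $X(M)\stackrel{1}{\hookrightarrow}X(E)$ is secured, the remainder is a routine transport of functionals along the isometries $\Psi$ for $M$ and for $E$.
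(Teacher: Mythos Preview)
Your proof is correct and follows essentially the same route as the paper: both arguments use Proposition~\ref{injectivefacts}~c) to view $X(M)$ isometrically inside $X(E)$, push $(\varphi_j)_{j=1}^\infty$ through $\Psi$ to a functional on $X(M)$, extend by Hahn--Banach to $X(E)'$, pull back through $\Psi^{-1}$ to obtain $(\widetilde{\varphi}_j)_{j=1}^\infty\in X'(E')$, and finally test on $x\cdot e_j$ to recover the coordinatewise extension. Your write-up is in fact somewhat more explicit than the paper's in justifying the isometric embedding $X(M)\stackrel{1}{\hookrightarrow}X(E)$, but the underlying idea is identical.
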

\begin{proof} Let $M$ be a subspace of $E$ and $(\varphi_{j})_{j=1}^{\infty}\in X'(M')$. Our hypothesis and Proposition \ref{injectivefacts} c) allow us to take $X(M)$ as a normed subspace of $X(E)$ and we obtain the following diagram 
$$(\varphi_{j})_{j=1}^{\infty}\in X'(M') \longmapsto \varphi \in X(M)' \xrightarrow{\hspace*{1cm}} \widetilde{\varphi} \in X(E)' \longmapsto (\widetilde{\varphi}_{j})_{j=1}^{\infty}\in X'(E'), $$
where $\widetilde{\varphi}$ is a Hahn-Banach extension of $\varphi$. From this we have
$$\|(\varphi_j)^\infty_{j=1}\|_{X'(M')}=\|\varphi\|=\|\widetilde{\varphi}\| = \|(\widetilde{\varphi}_j)^\infty_{j=1}\|_{X'(E')}$$
and also
$$\varphi((x_j)^\infty_{j=1}) = \sum_{j=1}^\infty\varphi_j(x_j) \textrm{ and } \widetilde{\varphi}((x_j)^\infty_{j=1}) = \sum_{j=1}^\infty\widetilde{\varphi}_j(x_j),$$
for all $(x_j)^\infty_{j=1} \in X(M)$. In particular, for all $x \in M$ and for all $j \in \mathbb{N}$, we have $x \cdot e_j \in X(M)$ and so $\widetilde{\varphi}_j(x) = \widetilde{\varphi}(x \cdot e_j)=\varphi(x \cdot e_j) = \varphi_j(x).$
\end{proof}

\begin{example}\rm
The sequence class $\ell_p\langle \cdot \rangle$ has the extension property as a consequence of Example \ref{exduals} b) combined with the above proposition.
\end{example}

\section{Injective-type norms}

Let us now introduce a class of reasonable norms, involving sequence classes, on the tensor product $E\otimes F$. These norms are, from a certain point of view, generalizations of the injective norm $\varepsilon$. 

We need to use certain definitions and results on the scalar component $X(\mathbb{K})$ of a sequence class $X$ and so we present some of the required theory of scalar-valued sequence spaces. Next, for the convenience of the reader, we present some definitions in the scope of tensor norms.

\begin{definition}\rm \cite[Definition 3.1]{Ariel} A \textit{scalar sequence space} $\lambda$ is a Banach space formed by scalar-valued sequences, that is, $\lambda \subseteq \mathbb{K}^\mathbb{N}$, endowed with the usual coordinatewise algebraic operations, satisfying the following conditions:
	(i) $c_{00} \subseteq \lambda \stackrel{1}{\hookrightarrow} \ell_\infty$.
	(ii) $(e_j)_{j=1}^\infty$ is a  Schauder basis for $\lambda$.
	(iii)  $\|e_j\|_{\lambda}=1$ for every $j\in\mathbb{N}$.
\end{definition}
Defining also the sequence space
$\lambda_*:=\{ (\varphi(e_j))_{j=1}^\infty:\varphi\in \lambda^* \} \subseteq \mathbb{K}^\mathbb{N}$
we have the following

\begin{proposition} {\rm \cite[Proposition 3.3]{Ariel}} Let $\lambda$ be a scalar sequence space. Then $\lambda_*$ is a linear space with the usual algebraic operations, the map
	$$\|\cdot \|_{\lambda_*}\colon \lambda_* \to [0,\infty)~,~ \|(\varphi(e_j))_{j=1}^\infty \|_{\lambda_*}=\|\varphi\|, $$
	is a norm on $\lambda_*$ and the correspondence
	$\varphi \in \lambda' \mapsto (\varphi(e_j))_{j=1}^\infty \in \lambda_*$
	is an isometric isomorphism. In particular, $\lambda_*$ is a Banach space. Besides that, we have:\\
	{\rm a)} for every $(\alpha_j)_{j=1}^\infty\in \lambda$, $(\alpha_j)_{j=1}^\infty=\sum\limits_{j=1}^\infty \alpha_j e_j$ and {\rm b)} $c_{00} \subseteq \lambda_* \stackrel{1}{\hookrightarrow} \ell_\infty$ and $\|e_j\|_{\lambda_*}=1$, \ $\forall j\in\mathbb{N}$.
\end{proposition}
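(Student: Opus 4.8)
The plan is to identify $\lambda_*$ with the Banach space $\lambda'$ through the coordinate map $\varphi \mapsto (\varphi(e_j))_{j=1}^\infty$ and then read off every assertion from the corresponding property of $\lambda'$; the only genuine work lies in describing the coefficient functionals of the basis $(e_j)_{j=1}^\infty$ explicitly. \emph{Step 1 (coordinate functionals and item a)).} For each $j$ let $P_j^*\colon \lambda \to \mathbb{K}$ be the $j$-th coordinate projection, $P_j^*\big((\alpha_i)_{i=1}^\infty\big) = \alpha_j$. Since $\lambda \stackrel{1}{\hookrightarrow} \ell_\infty$ we get $|\alpha_j| \le \|(\alpha_i)_{i=1}^\infty\|_{\ell_\infty} \le \|(\alpha_i)_{i=1}^\infty\|_\lambda$, so $P_j^* \in \lambda'$ with $\|P_j^*\| \le 1$, and $P_j^*(e_j) = 1 = \|e_j\|_\lambda$ forces $\|P_j^*\| = 1$. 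Now fix $x = (\alpha_i)_{i=1}^\infty \in \lambda$ and write its unique basis expansion $x = \sum_{i=1}^\infty c_i(x) e_i$, convergent in $\lambda$, hence --- again because $\lambda \stackrel{1}{\hookrightarrow} \ell_\infty$ --- convergent coordinatewise. Comparing the $k$-th coordinate of the partial sums (equal to $c_k(x)$ for all indices $\ge k$) with the $k$-th coordinate $\alpha_k$ of $x$ gives $c_k(x) = \alpha_k$; thus the basis coefficient functionals are exactly the $P_j^*$, which is precisely item a): $(\alpha_j)_{j=1}^\infty = \sum_{j=1}^\infty \alpha_j e_j$ in $\lambda$. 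In particular $c_{00} = \mathrm{span}\{e_j : j \in \mathbb{N}\}$ is dense in $\lambda$.

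\emph{Step 2 (the isometric isomorphism).} Consider $\Phi\colon \lambda' \to \mathbb{K}^\mathbb{N}$, $\Phi(\varphi) = (\varphi(e_j))_{j=1}^\infty$. It is linear and, by the very definition of $\lambda_*$, satisfies $\Phi(\lambda') = \lambda_*$; hence $\lambda_*$ is a linear subspace of $\mathbb{K}^\mathbb{N}$ and its coordinatewise operations are those transported from $\lambda'$. If $\Phi(\varphi) = 0$ then $\varphi$ vanishes on the dense subspace $c_{00}$ by Step 1, so $\varphi = 0$; therefore $\Phi$ is a linear bijection onto $\lambda_*$. Consequently the rule $\|(\varphi(e_j))_{j=1}^\infty\|_{\lambda_*} := \|\varphi\|$ is unambiguous and is nothing but the norm of $\lambda'$ pushed forward along $\Phi$, so it is a norm on $\lambda_*$, the map $\Phi$ is an isometric isomorphism, and $\lambda_*$ is complete because $\lambda'$ is.

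\emph{Step 3 (item b) and conclusion).} For every $\varphi \in \lambda'$ and every $j$ one has $|\varphi(e_j)| \le \|\varphi\|\,\|e_j\|_\lambda = \|\varphi\|$, so $\|\Phi(\varphi)\|_{\ell_\infty} \le \|\varphi\| = \|\Phi(\varphi)\|_{\lambda_*}$, i.e. $\lambda_* \stackrel{1}{\hookrightarrow} \ell_\infty$; moreover any finite combination $\sum_{j=1}^n \beta_j P_j^*$ belongs to $\lambda'$ by Step 1 and is sent by $\Phi$ to $(\beta_1, \dots, \beta_n, 0, 0, \dots)$, whence $c_{00} \subseteq \lambda_*$; and $\|e_j\|_{\lambda_*} = \|P_j^*\| = 1$ was computed in Step 1. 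I expect the single delicate point to be Step 1 --- identifying the abstract Schauder-basis coefficient functionals of $\lambda$ with the coordinate projections --- since it is exactly what legitimizes the definition of $\|\cdot\|_{\lambda_*}$, the injectivity of $\Phi$, and the inclusion $c_{00} \subseteq \lambda_*$.
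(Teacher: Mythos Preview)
Your proof is correct and complete. Note, however, that the paper does not supply its own proof of this proposition: it is quoted verbatim from \cite[Proposition~3.3]{Ariel} and stated without argument, so there is no ``paper's proof'' to compare against. Your argument is the standard one --- identifying the coefficient functionals of the Schauder basis $(e_j)$ with the coordinate projections via the embedding $\lambda \stackrel{1}{\hookrightarrow} \ell_\infty$, then transporting the dual-space structure along the resulting bijection $\Phi$ --- and each step is sound. The only place one might wish for a word more is in Step~1, where you implicitly use that convergence in $\lambda$ forces coordinatewise convergence (which you do justify via $\lambda \stackrel{1}{\hookrightarrow} \ell_\infty$); everything else follows formally.
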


\begin{example}\rm The following are scalar sequence spaces:
	$\lambda = \ell_p$ with $\lambda_* = \ell_{p^*}$ for every $1 < p < \infty$, $\lambda = c_0$ with $\lambda_* = \ell_1$, $\lambda = \ell_1$ with $\lambda_* = \ell_\infty$, $\lambda = h_M$ with $\lambda_* = \ell_{M^*}$ (see \cite[Examples 3.2]{Ariel}) and $\lambda = \ell_M$ with $\lambda_* = \ell_{M^*}$ if $M$ satisfies the $\Delta_2$-condition at zero.
\end{example}

The norms $\varepsilon$ and $\pi$ on $E\otimes F$ are given by (see \cite[Chapters 2 and 3]{R. Ryan}) 
\[\varepsilon(u) = \sup\left\{\left|\sum_{j=1}^k \varphi(x_j) \psi(y_j)\right|: \varphi \in B_{E'}, \psi \in B_{F'}\right\},\]
where $\sum_{j=1}^k x_j\otimes y_j$ is any representation ou $u$, and
$\pi(u) = \inf\left\{\sum_{j=1}^k \|x_j\| \|y_j\|\right\}$,
the infimum being taken over all representations of $u$. We need to use the symbol $B(E,F)$ to represent the space of all bilinear forms on $E\times F$.

We say that a norm $\alpha$ on $E \otimes F$ is \emph{a reasonable crossnorm} if: (i) $\alpha(x \otimes y) \leq \|x\| \cdot \|u\|$ for all $x\in E$ and $y \in F$. (ii) The linear functional $\varphi \otimes \psi$ on $E \otimes F$ is continuous and $\|\varphi \otimes \psi\| \le \|\varphi\|\|\psi\|$ for all $\varphi \in E'$ and $\psi \in F'$. A characterization given in \cite[Proposition 6.1]{R. Ryan} ensures that $\alpha$  is a reasonable crossnorm if and only if $\varepsilon(u) \le \alpha(u) \le \pi(u)$ for every $u \in E \otimes F$.

We say that a correspondence that assigns, for all Banach spaces $E$ and $F$, a norm $\alpha$ on $E \otimes F$  is {\it a tensor norm} if:\\
$\bullet$ $\alpha$ is a reasonable crossnorm.\\
$\bullet$ $\alpha$ is \emph{uniform}, that is, for all Banach spaces $E_1,E_2, F_1, F_2$ and all operators $T_i \in {\cal L}(E_i,F_i)$, $i = 1,2$, holds $\|T_1 \otimes T_2 \colon E_1 \otimes_\alpha E_2 \longrightarrow F_1 \otimes_\alpha F_2\| \leq \|T_1\|\cdot \|T_2\|$.\\
$\bullet$ $\alpha$ is \emph{finitely generated}, that is, for all Banach spaces $E,F$ and any $u \in E \otimes F$,
$$\alpha(u; E \otimes F) = \inf\left\{\alpha(u; M \otimes N) : u \in M \otimes N, M \in {\cal F}(E),N \in {\cal F}(F) \right\}, $$
where ${\cal F}(E)$ is the collection of all finite dimensional subspaces of $E$.

We can now introduce our class of norms.

\begin{proposition}\label{normagxy}
	Let $E$ and $F$ be Banach spaces and $\lambda$ be a scalar sequence space. Then, the expression  
	$$\alpha_\lambda^{ }(u)= \sup\left\lbrace \left|\sum_{j=1}^{k}\sum_{n=1}^{\infty}\varphi_{n}(x_{j})\psi_{n}(y_{j}) \right|: (\varphi_{n})_{n=1}^{\infty}\in B_{X(E')}, (\psi_{n})_{n=1}^{\infty} \in B_{Y(F')} \right\rbrace$$
	defines a reasonable crossnorm on $E\otimes F$ for any linearly stable sequence classes $X$ and $Y$ that meet the conditions $X(\mathbb{K})\stackrel{1}{\hookrightarrow} \lambda$ and $Y(\mathbb{K})\stackrel{1}{\hookrightarrow} \lambda_*$. 
\end{proposition}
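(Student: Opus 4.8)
The plan is to verify, in order: (1) $\alpha_\lambda$ is finite on $E\otimes F$ and satisfies $\alpha_\lambda\le\pi$; (2) $\alpha_\lambda$ is a seminorm; (3) $\varepsilon\le\alpha_\lambda$, so that $\alpha_\lambda$ is actually a norm; and (4) $\alpha_\lambda$ is a reasonable crossnorm, by the characterization \cite[Proposition 6.1]{R. Ryan}. Steps (2)--(4) will be short; the real content is step (1), and it is there that the hypotheses $X(\mathbb{K})\stackrel{1}{\hookrightarrow}\lambda$, $Y(\mathbb{K})\stackrel{1}{\hookrightarrow}\lambda_*$ and the linear stability of $X$ and $Y$ enter.

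For step (1), I would fix a representation $u=\sum_{j=1}^{k}x_j\otimes y_j$ and sequences $(\varphi_n)_n\in B_{X(E')}$, $(\psi_n)_n\in B_{Y(F')}$, and argue coordinate by coordinate in $j$. The evaluation map $\widehat{x_j}\colon E'\to\mathbb{K}$, $\varphi\mapsto\varphi(x_j)$, has norm $\|x_j\|$, so linear stability of $X$ gives $(\varphi_n(x_j))_n=\widehat{\widehat{x_j}}\bigl((\varphi_n)_n\bigr)\in X(\mathbb{K})$ with $\|(\varphi_n(x_j))_n\|_{X(\mathbb{K})}\le\|x_j\|$, and the embedding $X(\mathbb{K})\stackrel{1}{\hookrightarrow}\lambda$ keeps that bound in $\lambda$; symmetrically $(\psi_n(y_j))_n\in Y(\mathbb{K})\stackrel{1}{\hookrightarrow}\lambda_*$ with norm at most $\|y_j\|$. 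The duality between $\lambda$ and $\lambda_*$ from \cite[Proposition 3.3]{Ariel}---where the Schauder basis property of $\lambda$ makes the pairing of $(s_n)_n\in\lambda$ with $(t_n)_n\in\lambda_*$ equal to the convergent series $\sum_n s_n t_n$, bounded by $\|(s_n)\|_\lambda\|(t_n)\|_{\lambda_*}$---then shows that $\sum_{n=1}^{\infty}\varphi_n(x_j)\psi_n(y_j)$ converges with $\bigl|\sum_{n=1}^{\infty}\varphi_n(x_j)\psi_n(y_j)\bigr|\le\|x_j\|\,\|y_j\|$. Summing these $k$ convergent series over $j$ and using the triangle inequality yields $\bigl|\sum_{j=1}^{k}\sum_{n=1}^{\infty}\varphi_n(x_j)\psi_n(y_j)\bigr|\le\sum_{j=1}^{k}\|x_j\|\,\|y_j\|$; taking the supremum over the two balls and then the infimum over representations of $u$ gives $\alpha_\lambda(u)\le\pi(u)<\infty$.

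The same computation shows that for fixed $(\varphi_n),(\psi_n)$ one has $\sum_{j}\sum_{n}\varphi_n(x_j)\psi_n(y_j)=\sum_{n}(\varphi_n\otimes\psi_n)(u)$ (a finite sum of convergent series is the convergent series of the finite sums), so the quantity inside the defining supremum depends only on $u$: each pair $(\varphi_n),(\psi_n)$ determines a linear functional $\Phi$ on $E\otimes F$ with $|\Phi(u)|\le\pi(u)$, and $\alpha_\lambda$ is the supremum, over all such $\Phi$, of the seminorms $u\mapsto|\Phi(u)|$; hence $\alpha_\lambda$ is a seminorm, which is step (2). For step (3), given $\varphi\in B_{E'}$ and $\psi\in B_{F'}$, properties (i) and (ii) of a sequence class put $\varphi\cdot e_1\in B_{X(E')}$ and $\psi\cdot e_1\in B_{Y(F')}$, and substituting these into the defining supremum produces precisely $(\varphi\otimes\psi)(u)$; the supremum over $\varphi,\psi$ then gives $\varepsilon(u)\le\alpha_\lambda(u)$, so $\alpha_\lambda(u)=0$ forces $u=0$ and $\alpha_\lambda$ is a norm. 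Finally $\varepsilon\le\alpha_\lambda\le\pi$ together with \cite[Proposition 6.1]{R. Ryan} give step (4).

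The main obstacle I anticipate is step (1): one must correctly compose the transport of coordinates through the linearly stable classes $X$ and $Y$, the isometric embeddings into $\lambda$ and $\lambda_*$, and the $\lambda$--$\lambda_*$ pairing, all while verifying that the inner series $\sum_n\varphi_n(x_j)\psi_n(y_j)$ converges---which is exactly the point where the Schauder basis hypothesis on $\lambda$ is used. After that, the seminorm axioms and the reduction to $\varepsilon\le\alpha_\lambda\le\pi$ are routine.
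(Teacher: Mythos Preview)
Your proposal is correct and follows essentially the same approach as the paper: both arguments transport $(\varphi_n(x_j))_n$ into $\lambda$ and $(\psi_n(y_j))_n$ into $\lambda_*$ via linear stability and the given embeddings, apply the $\lambda$--$\lambda_*$ pairing (the paper cites it as a H\"older-type inequality, \cite[Lemma 3.9]{Ariel}, while you derive it from \cite[Proposition 3.3]{Ariel}), and then conclude $\varepsilon\le\alpha_\lambda\le\pi$. The only cosmetic difference is packaging: the paper defines a bilinear map $A\colon E\times F\to B(X(E'),Y(F'))$, takes its linearization $A_L$, and sets $\alpha_\lambda(u):=\|A_L(u)\|$ (so well-definedness and the seminorm axioms come for free, and positivity follows from injectivity of $A_L$), whereas you verify directly that the defining expression equals $\sum_n(\varphi_n\otimes\psi_n)(u)$ and hence depends only on $u$, then use $\varepsilon\le\alpha_\lambda$ for definiteness.
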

\begin{proof} For $x\in E$ and $y\in F$, we define the form 
	\begin{align*}
		B_{x,y}:& X(E')\times Y(F')\longrightarrow \mathbb{K}\\
		&((\varphi_{n})_{n=1}^{\infty}, (\psi_{n})_{n=1}^{\infty})\longmapsto B_{x,y}((\varphi_{n})_{n=1}^{\infty},(\psi_{n})_{n=1}^{\infty})= \sum_{n=1}^{\infty}\varphi_{n}(x)\psi_{n}(y). 
	\end{align*}
	Note that for any $x \in E$ the linear operator $T_x: E' \to \mathbb{K}$ given by $T_x(\varphi) = \varphi(x)$ is continuous and $\|T_x\|=\|x\|$. So, the linear stability of $X$ gives us 
	\begin{equation*}
	\|(\varphi_{n}(x))^\infty_{n=1}\|_{X(\mathbb{K})} = \|(T_x(\varphi_{n}))_{n=1}^\infty\|_{X(\mathbb{K})} \leq \|x\|\left\|(\varphi_{n})_{n=1}^{\infty} \right\|_{X(E')},
	\end{equation*}	
	for all $(\varphi_{n})_{n=1}^{\infty} \in X(E')$. Thus, as $X(\mathbb{K})\stackrel{1}{\hookrightarrow} \lambda$ and $Y(\mathbb{K})\stackrel{1}{\hookrightarrow} \lambda_*$, the application $B_{x,y}$ is well-defined since  	
	\begin{align}\label{contagamma}
		\left|\sum_{n=1}^{\infty}\varphi_{n}(x) \psi_{n}(y)\right|	&\leq \|(\varphi_{n}(x))^\infty_{n=1}\|_\lambda \|(\psi_{n}(y))^\infty_{n=1}\|_{\lambda_*} \\
		& \le \|(\varphi_{n}(x))^\infty_{n=1}\|_{X(\mathbb{K})} \|(\psi_{n}(y))^\infty_{n=1}\|_{Y(\mathbb{K})}\nonumber \\
		&\leq \|x\| \|y\| \left\|(\varphi_{n})_{n=1}^{\infty} \right\|_{X(E')} \left\|(\psi_{n})_{n=1}^{\infty} \right\|_{Y(F')} < \infty, \nonumber
	\end{align}
	where the first inequality comes from a H\"older-type inequality for scalar sequence spaces (see \cite[Lemma 3.9]{Ariel}). It easy to see that $B_{x,y}$ is bilinear and its continuity follows from \eqref{contagamma}. It is also easy to show that the application $A: E\times F\longrightarrow B( X(E'),Y(F')) $ given by $A(x,y)=B_{x,y}$ is bilinear and we can take its linearization 
	\[ A_L: E\otimes F \longrightarrow B( X(E'), Y(F'))\ , \ 
		u= \sum_{j=1}^{k}x_{j}\otimes y_{j}\longmapsto \sum_{j=1}^{k}B_{x_{j},y_{j}},\]
	for which  $\sum_{j=1}^{k}B_{x_{j},y_{j}}((\varphi_{n})_{n=1}^{\infty}, (\psi_{n})_{n=1}^{\infty})= \sum_{j=1}^{k}\sum_{n=1}^{\infty}\varphi_{n}(x_{j})\psi_{n}(y_{j})$. Following in a similar way to the calculus made in \eqref{contagamma}, we get
	\begin{align*}
		\left| A_L(u)((\varphi_{n})_{n=1}^{\infty}, (\psi_{n})_{n=1}^{\infty})\right| &\leq \sum_{j=1}^{k}\left|\sum_{n=1}^{\infty} \varphi_{n}(x_{j})\psi_{n}(y_{j})  \right| \\
		&\leq \left\|(\varphi_{n})_{n=1}^{\infty} \right\|_{X(E')} \left\|(\psi_{n})_{n=1}^{\infty} \right\|_{Y(F')}\sum_{j=1}^{k} \|x_{j}\| \|y_{j}\|,	
	\end{align*}
	that is, $A_L$ maps $E\otimes F$ into continuous bilinear forms.  A straightforward calculus shows that $A_L$ is injective and all this allows us to define a norm $\alpha_\lambda^{ }$ on $E\otimes F$ by setting	
	\begin{equation*}
		\alpha_\lambda^{ }(u):= \left\|A_L(u) \right\|
		=\sup\left\lbrace \left|\sum_{j=1}^{k}\sum_{n=1}^{\infty}\varphi_{n}(x_{j})\psi_{n}(y_{j}) \right|: (\varphi_{n})_{n=1}^{\infty}\in B_{X(E')}, (\psi_{n})_{n=1}^{\infty} \in B_{Y(F')} \right\rbrace.
	\end{equation*}	
	It is immediate that $\varepsilon\leq \alpha_\lambda^{ }$ and, reasoning as in \eqref{contagamma}, we obtain 
	$$\alpha_\lambda^{ }(x\otimes y)= \sup\left\lbrace \left|\sum_{n=1}^{\infty}\varphi_{n}(x)\psi_{n}(y) \right|: (\varphi_{n})_{n=1}^{\infty}\in B_{X(E')}, (\psi_{n})_{n=1}^{\infty}\in B_{Y(F')}  \right\rbrace \leq \left\| x\right\| \left\| y\right\|.$$
	This and the fact that $\alpha_\lambda^{ }$ satisfies the triangle inequality by its definition yield that $\alpha_\lambda^{ }\leq \pi$. Thus $\alpha_\lambda^{ }$ is a reasonable crossnorm on $E\otimes F$.
\end{proof}

The symbol $E\otimes_{\alpha_\lambda^{ }}F$ denotes the tensor product $E\otimes F$ endowed with the norm $\alpha_\lambda^{ }$ and its completion will be denoted by $E\widehat{\otimes}_{\alpha_\lambda^{ }}F$. 

\begin{remark}\rm a) In Propositions \ref{normagxy} the roles of $\lambda$ and $\lambda_*$ can be exchanged by adjusting the associated hypotheses without compromising the results obtained.\\
b) Observing the proof of Proposition \ref{normagxy} we realize that, with the same hypotheses and with an argument similar to the one developed in \eqref{contagamma}, we can prove (and we did this, but that is not the scope of this paper) the inequality $\varepsilon \le \alpha_{X,Y}$, a formerly imposed condition in \cite[Proposition 2.2]{BCN}.\\
c) In all results presented from now on, whenever we assert that $\alpha_\lambda^{ }$ is a reasonable norm and/or enjoy a certain property, we will be admitting that any sequence classes $X$ and $Y$ in the definitions of this norm satisfy sufficient conditions for this assertion. 
\end{remark}

\begin{proposition}
	The norm $\alpha_\lambda^{ }$ is uniform regardless of the  scalar sequence space $\lambda$. 
\end{proposition}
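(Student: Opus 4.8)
The plan is to run the classical uniformity argument for reasonable crossnorms, transporting the defining suprema through the adjoint operators and invoking the linear stability of $X$ and $Y$. Fix Banach spaces $E_1, E_2, F_1, F_2$ and operators $T_i \in \mathcal{L}(E_i; F_i)$, $i = 1, 2$, and let $u = \sum_{j=1}^{k} x_j \otimes y_j \in E_1 \otimes E_2$. Since $(T_1 \otimes T_2)(u) = \sum_{j=1}^{k} T_1(x_j) \otimes T_2(y_j)$ and $\alpha_\lambda$ does not depend on the chosen representation, it is enough to prove that
\[
\alpha_\lambda\bigl((T_1 \otimes T_2)(u); F_1 \otimes F_2\bigr) \leq \|T_1\|\,\|T_2\|\,\alpha_\lambda(u; E_1 \otimes E_2).
\]
Note that $\alpha_\lambda$ is a well-defined reasonable crossnorm on $F_1 \otimes F_2$ by Proposition \ref{normagxy}, because the hypotheses $X(\mathbb{K}) \stackrel{1}{\hookrightarrow} \lambda$ and $Y(\mathbb{K}) \stackrel{1}{\hookrightarrow} \lambda_*$ only concern the scalar components of $X$ and $Y$ and are insensitive to the underlying Banach spaces. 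If $T_1 = 0$ or $T_2 = 0$ the inequality is trivial, so assume $\|T_1\|, \|T_2\| > 0$.

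Next I would rewrite the defining expression of $\alpha_\lambda$ at $(T_1\otimes T_2)(u)$. Given $(\varphi_n)_{n=1}^{\infty} \in B_{X(F_1')}$ and $(\psi_n)_{n=1}^{\infty} \in B_{Y(F_2')}$, and writing $T_1' \colon F_1' \to E_1'$, $T_2' \colon F_2' \to E_2'$ for the adjoints, we have $\varphi_n(T_1 x_j) = (T_1' \varphi_n)(x_j)$ and $\psi_n(T_2 y_j) = (T_2' \psi_n)(y_j)$, so
\[
\sum_{j=1}^{k} \sum_{n=1}^{\infty} \varphi_n(T_1 x_j)\, \psi_n(T_2 y_j) = \sum_{j=1}^{k} \sum_{n=1}^{\infty} (T_1' \varphi_n)(x_j)\, (T_2' \psi_n)(y_j).
\]
The linear stability of $X$ gives $\widehat{T_1'} \colon X(F_1') \to X(E_1')$ with $\|\widehat{T_1'}\| = \|T_1'\| = \|T_1\|$, and likewise the linear stability of $Y$ gives $\|\widehat{T_2'} \colon Y(F_2') \to Y(E_2')\| = \|T_2\|$. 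Hence $(T_1' \varphi_n)_{n=1}^{\infty} \in X(E_1')$ with norm at most $\|T_1\|$ and $(T_2' \psi_n)_{n=1}^{\infty} \in Y(E_2')$ with norm at most $\|T_2\|$.

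Finally, setting $\widetilde{\varphi}_n := \|T_1\|^{-1} T_1' \varphi_n$ and $\widetilde{\psi}_n := \|T_2\|^{-1} T_2' \psi_n$ we obtain $(\widetilde{\varphi}_n)_{n=1}^{\infty} \in B_{X(E_1')}$ and $(\widetilde{\psi}_n)_{n=1}^{\infty} \in B_{Y(E_2')}$, whence
\[
\left| \sum_{j=1}^{k} \sum_{n=1}^{\infty} \varphi_n(T_1 x_j)\, \psi_n(T_2 y_j) \right| = \|T_1\|\,\|T_2\| \left| \sum_{j=1}^{k} \sum_{n=1}^{\infty} \widetilde{\varphi}_n(x_j)\, \widetilde{\psi}_n(y_j) \right| \leq \|T_1\|\,\|T_2\|\, \alpha_\lambda(u).
\]
Taking the supremum over $(\varphi_n)_{n=1}^{\infty} \in B_{X(F_1')}$ and $(\psi_n)_{n=1}^{\infty} \in B_{Y(F_2')}$ yields the desired inequality, and hence the uniformity of $\alpha_\lambda$. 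I do not anticipate a genuine obstacle: the only points requiring care are that $\alpha_\lambda$ remains a reasonable crossnorm on $F_1 \otimes F_2$ (handled above) and the identification $\|\widehat{T_i'}\| = \|T_i'\| = \|T_i\|$, which merely combines the standard adjoint norm identity with the linear stability hypothesis already in force.
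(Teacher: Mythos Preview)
Your proof is correct and follows essentially the same route as the paper's: rewrite $\varphi_n(T_1 x_j)$ and $\psi_n(T_2 y_j)$ via the adjoints, use linear stability of $X$ and $Y$ to control $\|(T_1'\varphi_n)_n\|_{X(E_1')}$ and $\|(T_2'\psi_n)_n\|_{Y(E_2')}$, normalize by $\|T_1\|$ and $\|T_2\|$, and compare with the supremum defining $\alpha_\lambda(u)$. Your explicit handling of the degenerate case $T_1=0$ or $T_2=0$ is a minor extra care that the paper omits.
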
 

\begin{proof} In what follows the operator $T': F' \to E'$ is the adjoint operator of $T \in \mathcal{L}(E;F)$.
	
	Let $E_{i}$ and $F_{i}$ be Banach spaces and $T_{i}\in \mathcal{L}(E_{i}; F_{i}), i=1,2$. Consider the linear operator $T_{1}\otimes T_{2}: E_{1}\otimes_{\alpha_\lambda^{ }}E_{2}\longrightarrow F_{1}\otimes_{\alpha_\lambda^{ }}F_{2}$ given by $T_{1}\otimes T_{2}(x\otimes y)= T_{1}(x)\otimes T_{2}(y)$. Since $X$ and $Y$ are linearly stable, we get
	$$\left\|(\varphi_{n} \circ T_{1})_{n=1}^{\infty} \right\|_{X(E_{1}')} = \left\|(T_{1}'(\varphi_{n}))_{n=1}^{\infty} \right\|_{X(E_{1}')}\leq \left\| T_{1}\right\|\left\|(\varphi_{n})_{n=1}^{\infty} \right\|_{X(F_{1}')}$$ whenever $(\varphi_{n})_{j=1}^{\infty} \in X(F_{1}')$, and the same can be done for $Y$ with the operator $T_2$. So, for $u\in E_1\otimes_{\alpha_\lambda^{ }}E_2$, we have
	\begin{align*}
		\alpha_\lambda^{ }&\left(T_{1}\otimes T_{2}(u), F_1 \otimes F_2\right) =\\
		&= \sup\left\lbrace \left|\sum_{j=1}^{k}\sum_{n=1}^{\infty}(\varphi_{n} \circ T_{1})(x_{j})(\psi_{n} \circ T_{2})(y_{j}) \right|: (\varphi_{n})_{n=1}^{\infty}\in B_{X(F_{1}')}, (\psi_{n})_{n=1}^{\infty}\in B_{Y(F_{2}')}  \right\rbrace  \\
		&= \left\|T_{1} \right\|\left\|T_{2} \right\|\cdot\\
		& \hspace{0.5cm} \sup\left\lbrace \left|\sum_{j=1}^{k}\sum_{n=1}^{\infty}\frac{T_{1}'}{\left\|T_{1} \right\| }(\varphi_{n})(x_{j})\frac{T_{2}'}{\left\|T_{2} \right\| }(\psi_{n})(y_{j}) \right|: (\varphi_{n})_{n=1}^{\infty}\in B_{X(F_{1}')}, (\psi_{n})_{n=1}^{\infty}\in B_{Y(F_{2}')}  \right\rbrace \\
		&\leq \left\| T_{1}\right\| \left\| T_{2}\right\| \sup\left\lbrace \left|\sum_{j=1}^{k}\sum_{n=1}^{\infty}\overline{\varphi}_{n}(x_{j})\overline{\psi}_{n}(y_{j}) \right|: (\overline{\varphi}_{n})_{n=1}^{\infty}\in B_{X(E_{1}')}, (\overline{\psi}_{n})_{n=1}^{\infty}\in B_{Y(E_{2}')}  \right\rbrace\\
		&= \left\| T_{1}\right\| \left\| T_{2}\right\| \alpha_\lambda^{ }(u, E_1 \otimes E_2)
	\end{align*}
	and therefore $T_{1}\otimes T_{2}$ is continuous and $\left\|T_{1}\otimes T_{2} \right\| \leq \left\|T_{1} \right\| \left\|T_{2} \right\| .$
\end{proof}

We say that a uniform reasonable crossnorm $\alpha$ is \emph{injective} (\emph{or respects subspaces}) if the norm induced on $M\otimes N$ by the norm of $E\otimes_{\alpha}F$ coincides with the norm on $M\otimes_{\alpha} N$ for all subspaces $M$ of $E$ and $N$ of $F$. It is well known that every injective uniform crossnorm is finitely generated (see \cite[Section 6.1]{R. Ryan}).

\begin{proposition} Let $\lambda$ be a scalar sequence space. Then $\alpha_\lambda^{ }$ is injective if $X$ and $Y$ have the extension property.
\end{proposition}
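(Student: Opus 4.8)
Fix subspaces $M$ of $E$ and $N$ of $F$; by definition I must show that $\alpha_\lambda(u; M\otimes N)=\alpha_\lambda(u; E\otimes F)$ for every $u\in M\otimes N$. One of the two inequalities is free of the hypothesis: the inclusion maps $i_M\colon M\hookrightarrow E$ and $i_N\colon N\hookrightarrow F$ have norm at most one, so applying the uniformity of $\alpha_\lambda$ (already proved) to $i_M\otimes i_N$ gives $\alpha_\lambda(u; E\otimes F)\le\alpha_\lambda(u; M\otimes N)$. The content of the statement is therefore the reverse inequality, and this is exactly where the extension property of $X$ and $Y$ is used.

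For the reverse inequality, I would take a representation $u=\sum_{j=1}^{k}x_{j}\otimes y_{j}$ with $x_{j}\in M$ and $y_{j}\in N$, and pick arbitrary $(\varphi_{n})_{n=1}^{\infty}\in B_{X(M')}$ and $(\psi_{n})_{n=1}^{\infty}\in B_{Y(N')}$. Since $X$ has the extension property, there is $(\widetilde{\varphi}_{n})_{n=1}^{\infty}\in X(E')$ with $\|(\widetilde{\varphi}_{n})_{n=1}^{\infty}\|_{X(E')}\le\|(\varphi_{n})_{n=1}^{\infty}\|_{X(M')}\le 1$ and $\widetilde{\varphi}_{n}|_{M}=\varphi_{n}$ for every $n$; likewise, since $Y$ has the extension property, there is $(\widetilde{\psi}_{n})_{n=1}^{\infty}\in Y(F')$ with $\|(\widetilde{\psi}_{n})_{n=1}^{\infty}\|_{Y(F')}\le 1$ and $\widetilde{\psi}_{n}|_{N}=\psi_{n}$ for every $n$. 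Because every $x_{j}$ lies in $M$ and every $y_{j}$ lies in $N$, we have $\varphi_{n}(x_{j})=\widetilde{\varphi}_{n}(x_{j})$ and $\psi_{n}(y_{j})=\widetilde{\psi}_{n}(y_{j})$ for all $j,n$, hence
$$\left|\sum_{j=1}^{k}\sum_{n=1}^{\infty}\varphi_{n}(x_{j})\psi_{n}(y_{j})\right| = \left|\sum_{j=1}^{k}\sum_{n=1}^{\infty}\widetilde{\varphi}_{n}(x_{j})\widetilde{\psi}_{n}(y_{j})\right| \le \alpha_\lambda(u; E\otimes F),$$
the last inequality holding because $(\widetilde{\varphi}_{n})_{n=1}^{\infty}\in B_{X(E')}$ and $(\widetilde{\psi}_{n})_{n=1}^{\infty}\in B_{Y(F')}$ are admissible competitors in the supremum defining $\alpha_\lambda(u;E\otimes F)$. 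Taking the supremum over all such $(\varphi_{n})_{n}$ and $(\psi_{n})_{n}$ gives $\alpha_\lambda(u; M\otimes N)\le\alpha_\lambda(u; E\otimes F)$, and together with the first inequality this yields equality, i.e. $\alpha_\lambda$ is injective; the cited fact from \cite[Section 6.1]{R. Ryan} then gives that it is finitely generated as well.

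The only point needing a word of care is that the double series $\sum_{j,n}\widetilde{\varphi}_{n}(x_{j})\widetilde{\psi}_{n}(y_{j})$ be absolutely convergent, so that the displayed middle expression is really the value of the linearization $A_L(u)$ at $\bigl((\widetilde{\varphi}_{n})_{n},(\widetilde{\psi}_{n})_{n}\bigr)$ and in particular independent of the chosen representation of $u$; this is immediate from the Hölder-type estimate in the proof of Proposition \ref{normagxy}, now applied in $E$ and $F$. I do not expect any genuine obstacle here: the argument is a direct transfer of the classical proof that $\varepsilon$ respects subspaces, with Hahn--Banach replaced by the extension property, and the main thing to keep track of is simply that the extended sequences stay in the unit balls.
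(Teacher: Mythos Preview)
Your proposal is correct and follows essentially the same route as the paper: one inequality comes from uniformity applied to the inclusions, and the reverse comes from extending sequences of functionals via the extension property of $X$ and $Y$ so that the value of the inner expression is unchanged while the extended sequences land in the larger unit balls. Your presentation is in fact slightly more explicit than the paper's (you fix an arbitrary competitor, extend, bound, and then take the supremum), and your remark on absolute convergence is a harmless extra bit of care.
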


\begin{proof}
	Let $E$ and $F$ be Banach spaces with $M$ and $N$ subspaces of $E$ and $F$, respectively. Since $\alpha_\lambda^{ }$ is uniform, we get 
	\begin{equation*}
		\alpha_\lambda^{ }(u;E\otimes F )\leq \alpha_\lambda^{ }(u; M\otimes N),
	\end{equation*} for all $u\in M\otimes N$.	
	On the other hand, let $u\in M\otimes N$ and $\sum_{j=1}^{k}x_{j}\otimes y_{j}$ be a representation of $u$. Thus, as $X$ and $Y$ have the extension property, we get
	\begin{align*}
		\alpha_\lambda^{ }(u; M\otimes N)&= \sup\left\lbrace \left|\sum_{j=1}^{k}\sum_{n=1}^{\infty}\varphi_{n}(x_{j})\psi_{n}(y_{j}) \right|: (\varphi_{n})_{n=1}^{\infty}\in B_{X(M')}, (\psi_{n})_{n=1}^{\infty}\in B_{Y(N')}  \right\rbrace \\
		&= \sup\left\lbrace \left|\sum_{j=1}^{k}\sum_{n=1}^{\infty}\widetilde{\varphi}_{n}(x_{j})\widetilde{\psi}_{n}(y_{j}) \right|: (\widetilde{\varphi}_{n})_{n=1}^{\infty}\in B_{X(E')}, (\widetilde{\psi}_{n})_{n=1}^{\infty}\in B_{Y(F')}  \right\rbrace\\
		&\leq \sup\left\lbrace \left|\sum_{j=1}^{k}\sum_{n=1}^{\infty}\xi_{n}(x_{j})\zeta_{n}(y_{j}) \right|: (\xi_{n})_{n=1}^{\infty}\in B_{X(E')}, (\zeta_{n})_{n=1}^{\infty}\in B_{Y(F')}  \right\rbrace\\
		&= \alpha_\lambda^{ }(u; E\otimes F)
	\end{align*}
	and therefore $\alpha_\lambda^{ }$ is injective. 
\end{proof}

Compiling all the facts, examples and results presented, some norms $\alpha_\lambda^{ }$  are given in the following

\begin{example}\label{exnormas}\rm	Let $M$ and $M^*$ be complementary Orlicz functions (cf. Example \ref{exduals}) and let $1 < r,q < \infty$.\\
a) If $1<p<\infty$, $\lambda = \ell_p$, $r\leq p$ and $q\leq p^*$, then $\alpha_\lambda^{ }$ is a uniform reasonable norm for many pairs of presented sequence classes such as $$(X,Y) \in \{(\ell_r(\cdot),\, \ell_{q}^{\rm mid}),\, (\ell_r^{\rm mid},\, \ell_{q}\langle \cdot \rangle),\, (\ell_{r}\langle \cdot \rangle,\, \ell_{q}^w),\, (\ell_r^u,\, \ell_{q}\langle \cdot \rangle),\, (\ell_r^w,\, \ell_{q}(\cdot)),\, (\ell_r^u,\, \ell_{q}^{\rm mid})\}$$ and a tensor norm for  $(X,Y) \in \{(\ell_r(\cdot),\, \ell_{q}(\cdot)),\, (\ell_r(\cdot),\, \ell_{q}\langle \cdot \rangle)\}$.\\
b) For the case $p=1$, if we take any sequence class $X$ such that $X(\mathbb{K}) =\ell_1$, then $\alpha_\lambda^{ }$ is a uniform reasonable norm for all sequence class $Y$. For instance, $\alpha_\lambda^{ }$ is a uniform reasonable norm for all \[(X,Y) \in \{(\ell_1^w,\, \ell_\infty(\cdot)),\, (\ell_1^{\rm mid},\, c_0(\cdot)),\, (\ell_1^{\rm mid},\, \ell_p^w(\cdot)),\, (\ell_1^u(\cdot),\, \ell_M(\cdot))\}.\] 
c) If we take $X=\ell_1(\cdot)$ and take $Y$ with the extension property, then $\alpha_\lambda^{ }$ is a tensor norm. For instance, $\alpha_\lambda^{ }$ is a tensor norm for every \[(X,Y) \in \{(\ell_1(\cdot),\, \ell_\infty(\cdot)),\, (\ell_1(\cdot),\, \ell_p\langle \cdot \rangle),\, (\ell_1(\cdot),\, c_0(\cdot)),\, (\ell_1(\cdot),\, \ell_M(\cdot))\}.\]
d) If $\lambda = h_{M^*}$, then $\alpha_\lambda^{ }$ is a tensor norm for $X = h_{M^*}(\cdot)$ and $Y=\ell_M(\cdot)$. If $\lambda = \ell_M$ and $M$ satisfies the $\Delta_2$-condition at zero, then $\alpha_\lambda^{ }$ is a tensor norm for $X = \ell_M(\cdot)$ and $Y = \ell_{M^*}(\cdot)$.\\ 
\end{example}

Now let us consider a different interpretation of the definition of norm $\alpha_\lambda^{ }$ in terms of dual classes. To motivate this point of view, we will use a particular case present in Example \ref{exnormas} a).

For $1<p<\infty$ and $\lambda = \ell_p$, the norm
\[\alpha_\lambda^{ }(u)= \sup\left\lbrace \left|\sum_{j=1}^{k}\sum_{n=1}^{\infty}\varphi_{n}(x_{j})\psi_{n}(y_{j}) \right|: (\varphi_{n})_{n=1}^{\infty}\in B_{\ell_p(E')}, (\psi_{n})_{n=1}^{\infty} \in B_{\ell_{p^*}\langle F' \rangle} \right\rbrace\]
is a tensor norm (could be just uniform, depending on the choice of sequence classes). Since $X' =\ell_{p}(\cdot)$ is a dual class for $\ell_{p^*}(\cdot)$ and $Y' = \ell_{p^*}\langle \cdot \rangle$ is a dual class for $\ell_p^u$, we can write $\alpha_\lambda^{ }$ in the form
\[\alpha_\lambda'(u)= \sup\left\lbrace \left|\sum_{j=1}^{k}\sum_{n=1}^{\infty}\varphi_{n}(x_{j})\psi_{n}(y_{j}) \right|: (\varphi_{n})_{n=1}^{\infty}\in B_{X'(E')}, (\psi_{n})_{n=1}^{\infty} \in B_{Y'(F')} \right\rbrace.\]
Of course, the sequence class $X^{\rm dual}$ is part of the game and can be used to build $\alpha_\lambda'$ norms if $X$ is dual-representable. It is also clear that not every norm $\alpha_\lambda^{ }$ can be interpreted as a norm $\alpha_\lambda'$, since not every sequence class is necessarily dual for another, as for example $c_0(\cdot)$.

The advantage of this approach will become clear in the following section of the paper and lies in the fact that we can use the notation $\varphi=(\varphi_{n})_{n=1}^{\infty}$ to indicate that $\varphi\in X(E)'$, $(\varphi_{n})_{n=1}^{\infty}\in X'(E')$ and $(\varphi_{n})_{n=1}^{\infty}\longmapsto \varphi$ by the isometric isomorphism $X'(E')\stackrel{1}{=}X(E)'$.

\section{The dual of the space $E\widehat{\otimes}_{\alpha_\lambda'}F$}

In the context of the $\alpha_\lambda'$ norm, which is central to the results of this section, there always exist sequence classes $X$ and $Y$ for which the sequence classes $X'$ and $Y'$ (that define $\alpha_\lambda'$) are dual for $X$ and $Y$, respectively.

The following lemma shows that $ E\widehat{\otimes}_{\alpha_\lambda'}F$ can be identified as a subspace of the Banach space $\left(C(B_{X(E)'}\times B_{Y(F)'}), \left\|\cdot \right\|_{\infty}  \right)$, where $B_{X(E)'}$ and $B_{Y(F)'}$ carry their weak$^*$ topology.  

\begin{lemma}\label{lema.identificacao}
	Let $E$ and $F$ be Banach spaces and $\lambda$ be a scalar sequence space. The map 
	\begin{equation*}
		\varPhi: E\otimes_{\alpha_\lambda'}F\longrightarrow C(B_{X(E)'}\times B_{Y(F)'})\ ,\ 
		u = \sum_{j=1}^{k}x_{j}\otimes y_{j}\longmapsto \varPhi_{u},
	\end{equation*}
	where $\varPhi_{u}$ is given by $\varPhi_{u}(\varphi,\psi)= \displaystyle\sum_{j=1}^{k}\sum_{n=1}^{\infty}\varphi_{n}(x_{j})\psi_{n}(y_{j})$, is a linear isometry. In particular, its extension $\widetilde{\varPhi}: E\widehat{\otimes}_{\alpha_\lambda'}F\longrightarrow C(B_{X(E)'}\times B_{Y(F)'})$ is also a embedding.
\end{lemma}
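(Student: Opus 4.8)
The plan is to verify that $\varPhi$ is a well-defined, linear, injective isometry on the algebraic tensor product $E\otimes_{\alpha_\lambda'}F$ and then to invoke the universal property of completions: a linear isometry from a normed space into a Banach space extends uniquely to a linear isometry on the completion, so since $(C(B_{X(E)'}\times B_{Y(F)'}),\|\cdot\|_\infty)$ is complete, $\varPhi$ extends to the claimed isometric embedding $\widetilde\varPhi$ of $E\widehat\otimes_{\alpha_\lambda'}F$.

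First I would do the bookkeeping. Fix $u=\sum_{j=1}^{k}x_{j}\otimes y_{j}$ and a pair $(\varphi,\psi)\in B_{X(E)'}\times B_{Y(F)'}$, identified through $X'(E')\stackrel{1}{=}X(E)'$ and $Y'(F')\stackrel{1}{=}Y(F)'$ with $(\varphi_{n})_{n=1}^{\infty}\in B_{X'(E')}$ and $(\psi_{n})_{n=1}^{\infty}\in B_{Y'(F')}$. The double series $\sum_{j=1}^{k}\sum_{n=1}^{\infty}\varphi_{n}(x_{j})\psi_{n}(y_{j})$ converges absolutely by exactly the H\"older-type estimate \eqref{contagamma} (read with $X',Y'$ in place of $X,Y$), so $\varPhi_{u}(\varphi,\psi)$ is a genuine scalar. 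Moreover $\varPhi_{u}$ is precisely the bilinear form $A_{L}(u)$ constructed in the proof of Proposition~\ref{normagxy} (for $X',Y'$), evaluated on $B_{X'(E')}\times B_{Y'(F')}$; since $A_{L}$ was shown there to be well defined (independent of the representation of $u$), linear, and injective, the same follows for $\varPhi$.

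Granting continuity of each $\varPhi_{u}$ (discussed below), the isometry is then immediate: $\|\varPhi_{u}\|_{\infty}=\sup\{|\varPhi_{u}(\varphi,\psi)|:(\varphi,\psi)\in B_{X(E)'}\times B_{Y(F)'}\}$, and because the isometric isomorphisms above carry closed unit balls onto closed unit balls, this supremum is literally $\sup\{|\sum_{j=1}^{k}\sum_{n=1}^{\infty}\varphi_{n}(x_{j})\psi_{n}(y_{j})|:(\varphi_{n})_{n=1}^{\infty}\in B_{X'(E')},\ (\psi_{n})_{n=1}^{\infty}\in B_{Y'(F')}\}=\alpha_\lambda'(u)$. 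Combined with the previous paragraph, $\varPhi$ is a linear isometry onto a subspace of $C(B_{X(E)'}\times B_{Y(F)'})$, and $\widetilde\varPhi$ is obtained as in the first paragraph.

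The real work is the one nonformal point: showing $\varPhi_{u}\in C(B_{X(E)'}\times B_{Y(F)'})$ when both factors carry the weak$^{*}$ topology. My plan is uniform approximation by the finite truncations $\varPhi_{u}^{N}(\varphi,\psi)=\sum_{j=1}^{k}\sum_{n=1}^{N}\varphi(x_{j}\cdot e_{n})\,\psi(y_{j}\cdot e_{n})$, where I use that $\varphi(x_{j}\cdot e_{n})=\varphi_{n}(x_{j})$ under the identification. Each $\varPhi_{u}^{N}$ is weak$^{*}$-continuous, being a finite sum of pointwise products of the weak$^{*}$-continuous evaluations $\varphi\mapsto\varphi(x_{j}\cdot e_{n})$ and $\psi\mapsto\psi(y_{j}\cdot e_{n})$, which are bounded on the unit balls. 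It then remains to show that $\sup\{\sum_{j=1}^{k}\sum_{n>N}|\varphi_{n}(x_{j})\psi_{n}(y_{j})|:(\varphi,\psi)\in B_{X(E)'}\times B_{Y(F)'}\}\to 0$ as $N\to\infty$; here the H\"older-type inequality \cite[Lemma 3.9]{Ariel}, the linear-stability bound $\|(\varphi_{n}(x_{j}))_{n}\|_{\lambda}\le\|x_{j}\|$ appearing in \eqref{contagamma}, and the fact that $(e_{n})_{n}$ is a Schauder basis of $\lambda$ are the natural tools, and it is exactly at this step that whatever additional hypotheses on $X,Y$ are being assumed (cf. the remark following Proposition~\ref{normagxy}) must be used to make the tail bound \emph{uniform} over the ball. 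Once this estimate is in place, $\varPhi_{u}$ is a uniform limit of continuous functions on the weak$^{*}$-compact set $B_{X(E)'}\times B_{Y(F)'}$, hence continuous, which closes the argument.
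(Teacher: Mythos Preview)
Your overall structure matches the paper's: linearity and representation-independence via $A_L$, the isometry $\|\Phi_u\|_\infty=\alpha_\lambda'(u)$ read straight from the definition, and extension to the completion. Where you and the paper diverge is in proving that each $\Phi_u$ is weak$^*$-continuous. The paper argues with nets: given $(\varphi^\gamma,\psi^\gamma)\to(\varphi,\psi)$ weak$^*$, it records the coordinatewise and partial-sum limits \eqref{conv.rede}--\eqref{conv.seq.2} and then passes through the double limit \eqref{conv.rededupla} to interchange $\lim_\gamma$ and $\lim_l$. Your route is instead to exhibit $\Phi_u$ as a uniform limit of the weak$^*$-continuous truncations $\Phi_u^N$.

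There is a genuine gap precisely where you flag it, and it is not a formality. The uniform tail estimate over the unit balls does not follow from H\"older in $\lambda$, linear stability, and $(e_n)$ being a Schauder basis of $\lambda$: those tools give decay of the tail for each \emph{fixed} $(\varphi,\psi)$, but nothing forces uniformity in $(\varphi,\psi)$. Concretely, take $X=Y=\ell_2(\cdot)$ (so that $X'=Y'=\ell_2(\cdot)$ are dual classes by Example~\ref{exduals}), $\lambda=\ell_2$, $E=F=\mathbb{K}$ and $u=1\otimes 1$. For every $N$ the choice $(\varphi_n)=(\psi_n)=e_{N+1}\in B_{\ell_2}$ gives $\sum_{n>N}|\varphi_n\psi_n|=1$, so your tail supremum never drops below $1$; and the same sequences $\varphi^{(k)}=\psi^{(k)}=e_k\to 0$ weak$^*$ in $(\ell_2)'$ show $\Phi_{1\otimes 1}(\varphi^{(k)},\psi^{(k)})=1\not\to 0=\Phi_{1\otimes 1}(0,0)$, so $\Phi_u$ is not weak$^*$-continuous in this instance. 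Thus without an additional, explicitly stated hypothesis on $X'$ or $Y'$ your uniform-approximation scheme cannot close. The paper's net argument encounters the same obstruction at \eqref{conv.rededupla}: in this example the partial sums satisfy $a_{l,k}=\mathbf{1}_{\{l\ge k\}}$, so $\lim_l\lim_k a_{l,k}=0$ while $\lim_k\lim_l a_{l,k}=1$, and the asserted double limit cannot be deduced from \eqref{conv.rede} and \eqref{conv.seq.2} alone.
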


\begin{proof}	
	Let us first show that $\varPhi$ is well-defined. Let $(\varphi^{\gamma}, \psi^{\gamma})_{\gamma\in \Gamma}$ a net in $B_{X(E)'}\times B_{Y(F)'}$ such that $(\varphi^{\gamma}, \psi^{\gamma})\stackrel{\gamma}{\longrightarrow}(\varphi, \psi) \in B_{X(E)'}\times B_{Y(F)'}.$ Since the projections are continuous, we have $\varphi^{\gamma}\stackrel{\gamma}{\longrightarrow}\varphi$ and $\psi^{\gamma}\stackrel{\gamma}{\longrightarrow}\psi$ in the respective weak$^*$ topologies and thus \begin{equation}\label{convest}\varphi^{\gamma}((x_{j})_{j=1}^{\infty})\stackrel{\gamma}{\longrightarrow} \varphi((x_{j})_{j=1}^{\infty}) \text{\ \ \  and \ \  } \psi^{\gamma}((y_{j})_{j=1}^{\infty})\stackrel{\gamma}{\longrightarrow} \psi((y_{j})_{j=1}^{\infty})
	\end{equation}
	for all $((x_{j})_{j=1}^{\infty}, (y_{j})_{j=1}^{\infty})\in X(E)\times Y(F)$.
	
	For each $\gamma\in \Gamma$, we can write $\varphi^{\gamma}=(\varphi_{n}^{\gamma})_{n=1}^{\infty}\in X'(E')$,  $\psi^{\gamma}=(\psi_{n}^{\gamma})_{n=1}^{\infty}\in Y'(F')$ and also  $\varphi=(\varphi_{n})_{n=1}^{\infty}\in X'(E')$ and $\psi= (\psi_{n})_{n=1}^{\infty}\in Y'(F')$. From the isomorphism in \eqref{isodual} and by \eqref{convest}, we get
	$$\varphi_{n}^{\gamma}(x)= \varphi^{\gamma}( x \cdot e_{n})\stackrel{\gamma}{\longrightarrow} \varphi( x \cdot e_{n})= \varphi_{n}(x) \ \text{ and }  \ \psi_{n}^{\gamma}(y)= \psi^{\gamma}( y \cdot e_{n})\stackrel{\gamma}{\longrightarrow} \psi( y\cdot e_{n})= \psi_{n}(y)$$
	for all $x\in E$, $y\in F$ and all $n\in \mathbb{N}$. Therefore,  
	\begin{equation}\label{conv.rede}
		\sum_{j=1}^{k}\sum_{n=1}^{l}\varphi_{n}^{\gamma}(x_{j})\psi_{n}^{\gamma}(y_{j})\stackrel{\gamma}{\longrightarrow} \sum_{j=1}^{k}\sum_{n=1}^{l}\varphi_{n}(x_{j})\psi_{n}(y_{j}),
	\end{equation}
	for all $l\in \mathbb{N}$. On the other hand, by the well-definition of the norm $\alpha_\lambda'$, we have
	\begin{equation}\label{conv.seq.}
		\lim_{l\rightarrow \infty} \sum_{j=1}^{k} \sum_{n=1}^{l}\varphi_{n}^{\gamma}(x_{j})\psi_{n}^{\gamma}(y_{j})=\sum_{j=1}^{k}\sum_{n=1}^{\infty}\varphi_{n}^{\gamma}(x_{j})\psi_{n}^{\gamma}(y_{j}) \ \text{for all} \ \gamma\in \Gamma,
	\end{equation}
	and
	\begin{equation}\label{conv.seq.2}
		\lim_{l\rightarrow \infty} \sum_{j=1}^{k} \sum_{n=1}^{l}\varphi_{n}(x_{j})\psi_{n}(y_{j})=\sum_{j=1}^{k}\sum_{n=1}^{\infty}\varphi_{n}(x_{j})\psi_{n}(y_{j}).
	\end{equation}
	Using \eqref{conv.rede} and \eqref{conv.seq.2} it is not difficult to show that  
	\begin{equation}\label{conv.rededupla}
		\sum_{j=1}^{k}\sum_{n=1}^{l}\varphi_{n}^{\gamma}(x_{j})\psi_{n}^{\gamma}(y_{j})\stackrel{(l,\gamma)}{\longrightarrow}\sum_{j=1}^{k}\sum_{n=1}^{\infty}\varphi_{n}(x_{j})\psi_{n}(y_{j})
	\end{equation}
	and from \eqref{conv.rede}, \eqref{conv.seq.} and \eqref{conv.rededupla} it follows that the iterated limits are the same and
	$$\lim\limits_{l\rightarrow \infty}\lim\limits_{\gamma}\sum_{j=1}^{k}\sum_{n=1}^{l}\varphi_{n}^{\gamma}(x_{j})\psi_{n}^{\gamma}(y_{j})= \sum_{j=1}^{k}\sum_{n=1}^{\infty}\varphi_{n}(x_{j})\psi_{n}(y_{j}) = \lim\limits_{\gamma}\lim\limits_{l\rightarrow \infty}\sum_{j=1}^{k}\sum_{n=1}^{l}\varphi_{n}^{\gamma}(x_{j})\psi_{n}^{\gamma}(y_{j}).$$
	Therefore,
	\begin{align*}
		\lim\limits_{\gamma}\varPhi_{u}(\varphi^{\gamma},\psi^{\gamma})&=\lim\limits_{\gamma}\sum_{j=1}^{k}\sum_{n=1}^{\infty}\varphi_{n}^{\gamma}(x_{j})\psi_{n}^{\gamma}(y_{j})=  \lim\limits_{\gamma}\lim_{l\rightarrow \infty} \sum_{j=1}^{k} \sum_{n=1}^{l}\varphi_{n}^{\gamma}(x_{j})\psi_{n}^{\gamma}(y_{j})\\
		& = \sum_{j=1}^{k}\sum_{n=1}^{\infty}\varphi_{n}(x_{j})\psi_{n}(y_{j})= \varPhi_{u}(\varphi, \psi)
	\end{align*}
	and this shows us that $\varPhi_{u}$ is continuous and $\varPhi$ is well-defined. Let us now prove that $\varPhi$ is linear. If $u,v\in E\otimes F$, $\beta\in \mathbb{K}$, $\varphi\in B_{X(E)'}$ and $\psi \in B_{Y(F)'}$, then
	\begin{equation*}
		\varPhi_{\beta u + v}(\varphi,\psi) = A_L(\beta u +v)(\varphi, \psi)= \beta A_L(u)(\varphi,\psi) + A_L(v)(\varphi,\psi) = \beta \varPhi_{u}(\varphi,\psi)+ \varPhi_{v}(\varphi,\psi),
	\end{equation*}
	where $A_L$ is the linearization of $A: E\times F\longrightarrow B( X'(E'), Y'(F')) $ given by $A(x,y)=B_{x,y}$ (cf. Proposition \ref{normagxy}).
	Therefore $\Phi$ is linear and it is immediate that $\left\|\varPhi_{u} \right\|_{\infty}= \alpha_\lambda'(u) $ for all $u\in E\otimes F$.
\end{proof}

Now we are in a position to characterize all continuous linear functional on $E\widehat{\otimes}_{\alpha_\lambda'}F$.

\begin{theorem}\label{teo.dual.gxy} 
	Let $E$ and $F$ be Banach spaces, $B:E \times F\longrightarrow \mathbb{K}$ a bilinear form and $\lambda$ a scalar sequence space. Then, $B_L:E \widehat{\otimes}_{\alpha_\lambda'}F\longrightarrow \mathbb{K}$ is a continuous linear functional if and only if there exists a regular Borel measure $\mu$ on the compact $B_{X(E)'}\times B_{Y(F)'}$ such that
	\begin{equation*}
		B(x,y)= \int_{B_{X(E)'}\times B_{Y(F)'}}\sum_{n=1}^{\infty}\varphi_{n}(x)\psi_{n}(y) d\mu(\varphi,\psi)	
	\end{equation*}
	for all $x\in E$ and $y\in F$. Furthermore, $\left\|B_L \right\|= \inf\left\|\mu \right\|$, where $\mu$ ranges over the set of all measures that correspond to $B$ in this way, and this infimum is attained.
\end{theorem}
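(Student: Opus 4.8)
The plan is to exploit the embedding from Lemma \ref{lema.identificacao}, which realizes $E\widehat{\otimes}_{\alpha_\lambda'}F$ isometrically as a closed subspace $V:=\widetilde{\varPhi}(E\widehat{\otimes}_{\alpha_\lambda'}F)$ of the Banach space $C(K)$, where $K:=B_{X(E)'}\times B_{Y(F)'}$ is compact in the product weak$^*$ topology. The Riesz representation theorem identifies $C(K)'$ with the space $M(K)$ of regular Borel measures on $K$, and the Hahn--Banach theorem says every $\Lambda\in V'$ extends (norm-preservingly) to some $\widetilde{\Lambda}\in C(K)'$, hence is given by integration against a measure $\mu\in M(K)$. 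Pulling this back through $\widetilde{\varPhi}$, every continuous functional on $E\widehat{\otimes}_{\alpha_\lambda'}F$ has the form $u\mapsto \int_K \varPhi_u(\varphi,\psi)\,d\mu(\varphi,\psi)$; evaluating on $u=x\otimes y$ and recalling $\varPhi_{x\otimes y}(\varphi,\psi)=\sum_{n=1}^\infty\varphi_n(x)\psi_n(y)$ yields the claimed integral formula for $B$. Conversely, given such a $\mu$, the formula $u\mapsto\int_K \varPhi_u\,d\mu$ is manifestly a continuous functional on $C(K)$ restricted to $V$, so its linearization $B_L$ is continuous on $E\widehat{\otimes}_{\alpha_\lambda'}F$; one checks it agrees with the linearization of the bilinear form $B$ by evaluating on elementary tensors and using linearity and density of $E\otimes F$.

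For the norm statement, first note that for any representing $\mu$ we have $|B_L(u)|=\left|\int_K\varPhi_u\,d\mu\right|\le\|\varPhi_u\|_\infty\,\|\mu\|=\alpha_\lambda'(u)\,\|\mu\|$, so $\|B_L\|\le\|\mu\|$ and hence $\|B_L\|\le\inf\|\mu\|$. For the reverse inequality, I would apply Hahn--Banach to the functional $\Lambda_0:=B_L\circ\widetilde{\varPhi}^{-1}$ defined on $V$ with $\|\Lambda_0\|=\|B_L\|$: it extends to $\widetilde\Lambda\in C(K)'$ with $\|\widetilde\Lambda\|=\|B_L\|$, and by Riesz $\widetilde\Lambda$ corresponds to some $\mu$ with $\|\mu\|=\|\widetilde\Lambda\|=\|B_L\|$. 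This $\mu$ represents $B$, which shows $\inf\|\mu\|\le\|B_L\|$ and moreover that the infimum is attained. Thus $\|B_L\|=\min\|\mu\|$.

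One technical point that needs care, rather than a genuine obstacle, is the verification that $K$ is compact in the product topology: each of $B_{X(E)'}$ and $B_{Y(F)'}$ is weak$^*$ compact by the Banach--Alaoglu theorem, and a finite product of compact spaces is compact by Tychonoff, so $K$ is compact; the continuity of the integrand $\varPhi_u$ on $K$ was already established inside the proof of Lemma \ref{lema.identificacao}. The step I expect to require the most attention is confirming that the functional produced by a general $\mu\in M(K)$, which a priori is only defined on the completion $E\widehat{\otimes}_{\alpha_\lambda'}F$ as $u\mapsto\int_K\widetilde\varPhi(u)(\varphi,\psi)\,d\mu$, actually \emph{is} the linearization of an honest bilinear form on $E\times F$ — i.e.\ that restricting to elementary tensors gives a bilinear map whose linearization extends continuously to the completion and coincides with the given functional. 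This follows from the universal property of the algebraic tensor product together with the fact that $E\otimes F$ is dense in $E\widehat{\otimes}_{\alpha_\lambda'}F$ and $\varPhi$ is an isometry, but it is the place where one must be scrupulous about which space each object lives in.
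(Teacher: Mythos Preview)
Your proposal is correct and follows essentially the same route as the paper's proof: embed $E\widehat{\otimes}_{\alpha_\lambda'}F$ isometrically into $C(K)$ via Lemma~\ref{lema.identificacao}, then combine Hahn--Banach with the Riesz representation theorem for both the existence of a representing measure and the attainment of the infimum, and for the converse bound use $\bigl|\int_K\varPhi_u\,d\mu\bigr|\le\|\varPhi_u\|_\infty\|\mu\|=\alpha_\lambda'(u)\|\mu\|$. The only cosmetic difference is that the paper makes this last inequality explicit through the Radon--Nikodym decomposition $d\mu=h\,d|\mu|$ with $|h|=1$, whereas you invoke it directly as a standard fact about complex measures.
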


\begin{proof}
	Suppose that $B_L:E \widehat{\otimes}_{\alpha_\lambda'}F\longrightarrow \mathbb{K}$ is a continuous linear functional. From the Lemma \ref{lema.identificacao}, we can identify $E \widehat{\otimes}_{\alpha_\lambda'}F$ as a subspace of $C(B_{X(E)'}\times B_{Y(F)'})$ and the Hahn-Banach Theorem ensures the existence of a continuous linear functional $\widetilde{B_L}:C(B_{X(E)'}\times B_{Y(F)'})\longrightarrow \mathbb{K}$ that extends $B_L$ and satisfies $\left\|B_L \right\|= \|\widetilde{B_L} \|.$
	
	By the Riesz Representation Theorem, the functional $\widetilde{B_L}$ is given by a single regular complex Borel measure $\mu$ in the form 
	$$\widetilde{B_L}(f)= \int_{B_{X(E)'}\times B_{Y(F)'}}f(\varphi,\psi) \ d\mu(\varphi,\psi),$$
	for each $f\in C(B_{X(E)'}\times B_{Y(F)'})$ and $\|\widetilde{B_L} \|= \left\| \mu\right\|.$  In particular, using Lemma \ref{lema.identificacao} for $f=x\otimes y$ we get
	$$B(x,y)=B_L(x\otimes y)= \widetilde{B_L}(x\otimes y)= \int_{B_{X(E)'}\times B_{Y(F)'}}\sum_{n=1}^{\infty}\varphi_{n}(x)\psi_{n}(y)d\mu(\varphi,\psi)$$
	for all $x\in E$ and $y\in F.$
	
	Conversely, since $B_L$ is the linearization of $B$, if $u\in E\otimes F$ and $\sum_{j=1}^{k}x_{j}\otimes y_{j}$ is a representation of $u$, we have 
	\begin{align*}
		B_L\left(u \right)&= \sum_{j=1}^{k}B_L(x_{j}\otimes y_{j})= \sum_{j=1}^{k}B(x_{j},y_{j})
		= \sum_{j=1}^{k}\int_{B_{X(E)'}\times B_{Y(F)'}}\sum_{n=1}^{\infty}\varphi_{n}(x_{j})\psi_{n}(y_{j})d\mu(\varphi,\psi)\\
		&= \int_{B_{X(E)'}\times B_{Y(F)'}}\sum_{j=1}^{k}\sum_{n=1}^{\infty}\varphi_{n}(x_{j})\psi_{n}(y_{j})d\mu(\varphi,\psi).
	\end{align*}
	From Lemma \ref{lema.identificacao} we known that $u=\Phi_u \in C(B_{X(E)'}\times B_{Y(F)'})$  and so $u$ is a Borel-measurable function. Moreover, since $\mu$ is a complex measure then $|\mu|$ is a positive finite measure and so $u\in L_{1}(|\mu|)$ because 
	\begin{align*}
		\int_{B_{X(E)'}\times B_{Y(F)'}}\left|u \right| \ d|\mu|(\varphi,\psi)
		&=   \int_{B_{X(E)'}\times B_{Y(F)'}}\left|\sum_{j=1}^{k}\sum_{n=1}^{\infty}\varphi_{n}(x_{j})\psi_{n}(y_{j})\right|d|\mu|(\varphi,\psi) \\
		&\leq \|\Phi_u\|_\infty \int_{B_{X(E)'}\times B_{Y(F)'}}d|\mu|(\varphi,\psi)\\
		&=  \alpha_\lambda'(u)\left| \mu\right| \left( B_{X(E)'}\times B_{Y(F)'}\right)
		= \alpha_\lambda'(u)\left\|\mu \right\|< \infty.
	\end{align*}
	Thus, from the Radon-Nikodym Theorem, there exists a $|\mu|$-integrable function $h$ on $B_{X(E)'}\times B_{Y(F)'}$ such that $|h|=1$ and 
	$$\int_{B_{X(E)'\times Y(F)'}}u \ d\mu(\varphi,\psi) = \int_{B_{X(E)'\times Y(F)'}}uh \ d|\mu|(\varphi,\psi),$$  	
	which brings us to
	\begin{align*}
		\left|B_L(u) \right|&= \left| \int_{B_{X(E)'}\times B_{Y(F)'}}\sum_{j=1}^{k}\sum_{n=1}^{\infty}\varphi_{n}(x_{j})\psi_{n}(y_{j})d\mu(\varphi,\psi)\right|\\
		&= \left|\int_{B_{X(E)'}\times B_{Y(F)'}}u  \ d\mu(\varphi,\psi)\right|= \left|\int_{B_{X(E)'}\times B_{Y(F)'}}uh  \ d|\mu|(\varphi,\psi)\right|  \\
		&\leq \int_{B_{X(E)'}\times B_{Y(F)'}}\left|u \right| \ d|\mu|(\varphi,\psi)
		\leq \alpha_\lambda'(u)\left\|\mu \right\|
	\end{align*}
	and therefore $B_L:E \otimes_{\alpha_\lambda'}F\longrightarrow\mathbb{K}$ is continuous. It follows that the extension $B_L:E \widehat{\otimes}_{\alpha_\lambda'}F\longrightarrow\mathbb{K}$ (here we use the same symbol) is continuous and $\left\|B_L\right\|\leq \left\|\mu \right\|.$ Finally, from what we did now and in the previous implication, we have $\left\|B_L\right\| = \inf\left\| \mu\right\| $ and that this infimum is attained.
\end{proof}

We have just characterized the elements of $(E\widehat{\otimes}_{\alpha_\lambda'}F)'$ in terms of certain bilinear forms. Let us define the Banach space formed by these applications.

\begin{definition}\rm
	We say that a bilinear form $B: E\times F\rightarrow \mathbb{K}$ is $\lambda$-integral if $B_L: E\widehat{\otimes}_{\alpha_\lambda'}F\rightarrow \mathbb{K}$ is continuous. We denote the space of all bilinear $\lambda$-integral forms by $\mathcal{B}_{I,\lambda}(E, F)$, endowed with the norm
	$$\left\|B \right\|_{{\cal B}_{I,\lambda}} = \inf\left\| \mu\right\| = \left\| B_L\right\|.$$ 
\end{definition}

From what we proved in the previous theorem we have
$\left(E\widehat{\otimes}_{\alpha_\lambda'}F \right)'\stackrel{1}{=} \mathcal{B}_{I,\lambda}(E, F)$.

\begin{remark}\rm
	a) A noteworthy fact: since $\varepsilon\leq \alpha_{\lambda}'$, one has $$\mathcal{B}_{I}(E,F)\subseteq \mathcal{B}_{I,\lambda}(E,F),$$for all Banach $E$ and $F$, i.e., every integral bilinear form (see \cite[Section 3.4]{R. Ryan}) is $\lambda$-integral.\\
	b) The only requirement on the norm $\alpha_\lambda'$ in Theorem \ref{teo.dual.gxy} is that it be a reasonable norm. For the cases where this norm is injective and if $M$ and $N$ are subspaces of $E$ and $F$, respectively, it follows from the Hahn-Banach Theorem that every $\lambda$-integral form defined on $M\times N$ can be extended to a $\lambda$-integral form defined on $E\times F$, with the same norm.
\end{remark}

The $\lambda$-integral forms satisfy a characteristic property of multi-ideals and the proof of this fact will be omitted for its simplicity.

\begin{proposition}
	Let $E,F,G$ and $H$ be Banach spaces, $T\in \mathcal{L}(G;E)$, $S\in \mathcal{L}(H;F)$ and $\lambda$ be a scalar sequence space. If $B\in \mathcal{B}_{I,\lambda}(E,F)$, then the bilinear form bilinear $B' = B \circ (T,S)$ is $\lambda$-integral and $$\left\|B' \right\|_{{\cal B}_{I,\lambda}}  \leq \left\|B \right\|_{{\cal B}_{I,\lambda}} \left\|T \right\| \left\| S\right\|.$$
\end{proposition}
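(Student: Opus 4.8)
The plan is to reduce the statement to the characterization of $\lambda$-integral forms via measures given by Theorem \ref{teo.dual.gxy}, and then transport a representing measure for $B$ along the adjoints of $T$ and $S$. First I would fix $B \in \mathcal{B}_{I,\lambda}(E,F)$ and invoke Theorem \ref{teo.dual.gxy} to obtain a regular Borel measure $\mu$ on $K_{E,F} := B_{X(E)'} \times B_{Y(F)'}$ with $\|\mu\| = \|B\|_{{\cal B}_{I,\lambda}}$ and
\[
B(x,y) = \int_{K_{E,F}} \sum_{n=1}^{\infty} \varphi_n(x)\psi_n(y)\, d\mu(\varphi,\psi), \qquad x \in E,\ y \in F.
\]
Then for $g \in G$, $h \in H$ one has $B'(g,h) = B(Tg,Sh) = \int_{K_{E,F}} \sum_n \varphi_n(Tg)\psi_n(Sh)\, d\mu(\varphi,\psi) = \int_{K_{E,F}} \sum_n (T'\varphi_n)(g)(S'\psi_n)(h)\, d\mu(\varphi,\psi)$, where $T' \colon E' \to G'$ and $S' \colon F' \to H'$ are the adjoints.

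The key step is to repackage this integral as one over $K_{G,H} := B_{X(G)'} \times B_{Y(H)'}$. Consider the map $\tau \colon K_{E,F} \to K_{G,H}$ induced by the adjoints: given $\varphi = (\varphi_n)_{n=1}^\infty \in B_{X(E)'}$, linear stability of $X$ gives $\|(T'\varphi_n)_{n=1}^\infty\|_{X'(G')} = \|(T'\varphi_n)_{n=1}^\infty\|_{X(G)'} \le \|T'\|\,\|\varphi\|_{X(E)'} \le \|T\|$ (using $\|T'\| = \|T\|$ and the identification $X'(E') \stackrel{1}{=} X(E)'$), and similarly $\|(S'\psi_n)_{n=1}^\infty\|_{Y(H)'} \le \|S\|$. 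So after dividing by $\|T\|\,\|S\|$ (the degenerate cases $T=0$ or $S=0$ being trivial), $\tau(\varphi,\psi) := \big(\|T\|^{-1}(T'\varphi_n)_n,\ \|S\|^{-1}(S'\psi_n)_n\big)$ lands in $K_{G,H}$. I would check that $\tau$ is weak$^*$-to-weak$^*$ continuous — this is where a little care is needed, since one must verify that $\varphi \mapsto T'\varphi$ is weak$^*$-continuous from $X(E)'$ to $X(G)'$; it suffices that for each fixed $(g_j)_{j=1}^\infty \in X(G)$ the functional $\varphi \mapsto \sum_j (T'\varphi_n)(g_j) = \widehat{T}^{\,*}$-type expression depends weak$^*$-continuously on $\varphi$, which follows because $\widehat{T}$ maps $X(G) \to X(E)$ (linear stability again) so the pairing equals $\varphi$ evaluated at $\widehat{T}((g_j)_j) \in X(E)$. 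Hence $\tau$ is continuous, $K_{E,F}$ is compact, and the pushforward $\nu := \|T\|\,\|S\|\cdot \tau_*\mu$ is a regular Borel measure on $K_{G,H}$ with $\|\nu\| \le \|T\|\,\|S\|\,\|\mu\|$.

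Finally, by the change-of-variables formula for pushforward measures,
\[
\int_{K_{G,H}} \sum_{n=1}^\infty \xi_n(g)\zeta_n(h)\, d\nu(\xi,\zeta) = \|T\|\,\|S\| \int_{K_{E,F}} \sum_{n=1}^\infty \tfrac{1}{\|T\|}(T'\varphi_n)(g)\tfrac{1}{\|S\|}(S'\psi_n)(h)\, d\mu(\varphi,\psi) = B'(g,h),
\]
so $\nu$ represents $B'$ in the sense of Theorem \ref{teo.dual.gxy}. That theorem then yields $B' \in \mathcal{B}_{I,\lambda}(G,H)$ and $\|B'\|_{{\cal B}_{I,\lambda}} = \inf\|\nu'\| \le \|\nu\| \le \|T\|\,\|S\|\,\|\mu\| = \|T\|\,\|S\|\,\|B\|_{{\cal B}_{I,\lambda}}$, which is the claimed inequality. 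The main obstacle is the weak$^*$-continuity of $\tau$; everything else is a routine transport of measure, and indeed this is presumably why the authors call the proof simple enough to omit.
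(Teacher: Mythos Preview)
Your argument via pushforward of the representing measure is correct, and the weak$^*$-continuity of $\tau$ is indeed the only point requiring any thought: your observation that, under the identifications $X'(E')\stackrel{1}{=}X(E)'$ and $X'(G')\stackrel{1}{=}X(G)'$, the map $(\varphi_n)_n \mapsto (T'\varphi_n)_n$ is precisely $(\widehat{T})'$ (the adjoint of $\widehat{T}\colon X(G)\to X(E)$) settles this cleanly.

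However, this is a considerably longer route than the one the authors have in mind. The paper defines $\|B\|_{\mathcal{B}_{I,\lambda}} = \|B_L\|$ where $B_L\colon E\widehat{\otimes}_{\alpha_\lambda'}F\to\mathbb{K}$, and it has already proved (Proposition~3.5) that $\alpha_\lambda'$ is uniform, so $T\otimes S\colon G\widehat{\otimes}_{\alpha_\lambda'}H \to E\widehat{\otimes}_{\alpha_\lambda'}F$ is bounded with $\|T\otimes S\|\le\|T\|\,\|S\|$. Since $(B')_L = B_L\circ(T\otimes S)$ on elementary tensors and hence on the completion, one immediately gets $\|B'\|_{\mathcal{B}_{I,\lambda}} = \|(B')_L\| \le \|B_L\|\,\|T\otimes S\| \le \|B\|_{\mathcal{B}_{I,\lambda}}\|T\|\,\|S\|$. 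This one-line argument is why the paper omits the proof. Your measure-theoretic transport is a perfectly valid alternative---and it has the conceptual merit of exhibiting an explicit representing measure for $B'$---but it replaces a trivial composition bound by a construction involving regularity of pushforwards and weak$^*$-continuity of an induced map, which is overkill for the statement at hand.
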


\section{Tensor characterization for $(\ell_p^{\rm mid})^u(E)$}

Tensor characterizations for sequence spaces are common in tensor product theory and operator theory. Distinguished examples of what has been said are the characterizations $\ell_{p}\langle E \rangle \stackrel{1}{=} E \widehat{\otimes}_\pi \ell_p$ (cf. Example \ref{exnew}) and $\ell_p^u(E) = E \widehat{\otimes}_\varepsilon \ell_p$ (see \cite[Section 8.2]{A.Defant}), for $1 \le p < \infty$.

To establish our characterization for the space $(\ell_p^{\rm mid})^u(E)$ we first need to present the procedure $X \mapsto X^u$. This procedure was first studied in the thesis \cite{Lucas} (not published) and improved and detailed in \cite{Ariel1}. A few definitions and results from this last reference will be enough.

A sequence class $X$ is said to be \emph{finitely shrinking} if, regardless of the Banach space $E$, the sequence $(x_j)_{j=1}^\infty \in X(E)$ and $k \in \mathbb{N}$, it holds
\[(x_j)_{j\neq k} :=(x_1,\ldots,x_{k-1}, x_{k+1}, \ldots) \in X(E) \textrm{ and } \|(x_j)_{j\neq k}\|_{X(E)} \le  \|(x_j)_{j=1}^\infty\|_{X(E)}.\]

Suppose that $X$ is finitely shrinking. If $(x_j)_{j=1}^\infty \in X(E)$, then $(x_n, x_{n+1}, \ldots) \in X(E)$ for every $n \in \mathbb{N}$. For a Banach space $E$, we define
\[X^u(E) := \{(x_j)_{j=1} \in X(E): \lim_n \|(x_n, x_{n+1}, \ldots)\|_{X(E)} = 0\},\]
endowed with the norm $\|\cdot\|_{X(E)}$. It is proved in \cite[Proposition 4.5]{Ariel1} that the rule $E  \mapsto X^u(E)$ defines a sequence class and if $X$ is linearly stable, then so is $X^u(\cdot)$.

\begin{example}\rm a) For every $1 \le p < \infty$ it is immediate that $(\ell_p^w)^u = \ell_p^u$ and $(\ell_p)^u = \ell_p(\cdot)$. We also have $(\ell_\infty)^u = c_0(\cdot)$ and by \cite[Theorem 3.10]{Fourie} we get $(\ell_p\left\langle \cdot \right\rangle)^u = \ell_p\left\langle \cdot \right\rangle$.\\
b) For $1 \le p < \infty$, it is easy to check that the sequence class $\ell_p^{\rm mid}$ is finitely shrinking. Furthermore, we have $(\ell_p^{\rm mid})^u \neq \ell_p^{\rm mid}$ and $(\ell_p^{\rm mid})^u \neq \ell_p^u$ (\cite[Proposition 4.9]{Ariel1}).
\end{example}

For the results of this section, which comprise the next theorem and its immediate corollary, it is necessary to precisely specify the sequence classes $X$ and $Y$ involved in the definition of our appropriate $\alpha_\lambda^{ }$. Here it is also necessary to use the fact (see \eqref{defmid}): for $(x_j)_{j=1}^\infty \in E^\mathbb{N}$, we have 
\[(x_j)_{j=1}^\infty \in \ell_p^{\rm mid}(E) \ \ \Leftrightarrow \ \ ((\varphi_n(x_j))_{n=1}^\infty)_{j=1}^\infty \in \ell_p(\ell_p), \ \forall \ (\varphi_{n})_{n=1}^\infty \in \ell_p^w(E').\]

\begin{theorem}
	Let $E$ be a Banach space and $\lambda = \ell_p$, $1 \le  p<\infty$. Taking $X=\ell_p^w (\cdot)$ and $Y= \ell_{p^*}(\cdot)$ we obtain the isometric isomorphism
	\[(\ell_p^{\rm mid})^u(E) \stackrel{1}{=} E \widehat{\otimes}_{\alpha_{\ell_p}^{}} \ell_p.\]
\end{theorem}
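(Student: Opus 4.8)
The plan is to build the isometric isomorphism concretely, exhibiting a map $E \widehat{\otimes}_{\alpha_{\ell_p}^{}} \ell_p \to (\ell_p^{\rm mid})^u(E)$ and showing it is surjective and norm-preserving. First I would record what the norm $\alpha_{\ell_p}^{}$ looks like with the prescribed choices $X = \ell_p^w(\cdot)$, $Y = \ell_{p^*}(\cdot)$: since $\ell_p^w(\mathbb{K}) = \ell_p = \lambda$ and $\ell_{p^*}(\mathbb{K}) = \ell_{p^*} = \lambda_*$, Proposition \ref{normagxy} applies and for $u \in E \otimes \ell_p$ we have
\[
\alpha_{\ell_p}^{}(u) = \sup\left\{ \left| \sum_{j=1}^k \sum_{n=1}^\infty \varphi_n(x_j)\, \psi_n(z_j) \right| : (\varphi_n)_{n=1}^\infty \in B_{\ell_p^w(E')},\ (\psi_n)_{n=1}^\infty \in B_{\ell_{p^*}(\ell_p')} \right\},
\]
where $u = \sum_{j=1}^k x_j \otimes z_j$ with $z_j \in \ell_p$. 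The natural candidate map sends $x \otimes z$ (with $z = (z^{(m)})_{m=1}^\infty \in \ell_p$) to the $E$-valued sequence $(z^{(m)} x)_{m=1}^\infty$, extended linearly; equivalently, via the standard identification $E \otimes \ell_p \hookrightarrow E^\mathbb{N}$, a general tensor $u = \sum_j x_j \otimes z_j$ maps to the sequence $\big(\sum_j z_j^{(m)} x_j\big)_{m=1}^\infty$.

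The core computation is to show that for finitely-supported $E$-valued sequences $w = (w_m)_{m=1}^\infty$ (i.e.\ elements of $E \otimes \ell_p$ under this identification), one has $\alpha_{\ell_p}^{}(w) = \|w\|_{p,\mathrm{mid}}$. Here I would use the characterization recalled just before the theorem: $\|w\|_{p,\mathrm{mid}} = \sup_{(\varphi_n) \in B_{\ell_p^w(E')}} \|((\varphi_n(w_m))_n)_m\|_{\ell_p(\ell_p)}$. The point is that the inner supremum over $(\psi_n)_{n} \in B_{\ell_{p^*}(\ell_p')}$ in the $\alpha_{\ell_p}^{}$-formula collapses, by duality $\ell_p(\ell_p)' = \ell_{p^*}(\ell_{p^*})$ together with $\ell_p' = \ell_{p^*}$, exactly to the $\ell_p(\ell_p)$-norm of the doubly-indexed scalar array $(\varphi_n(w_m))_{n,m}$ — one has to pair the $n$-index against $(\psi_n)$ and the $m$-index against the coordinates of $z_j \in \ell_p$, and check that these two pairings assemble into pairing the full array against a ball element of $\ell_{p^*}(\ell_{p^*})$. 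This is where Hölder for scalar sequence spaces (\cite[Lemma 3.9]{Ariel}) and a Fubini-type interchange of the finite sum over $j$ with the sums over $n,m$ are used. Once this identity on $c_{00}(E)$ is established, the isometry extends to completions.

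The last step is to identify the completion of $(E \otimes \ell_p, \alpha_{\ell_p}^{})$ inside $\ell_p^{\rm mid}(E)$ precisely as $(\ell_p^{\rm mid})^u(E)$, and not something larger or smaller. Since $E \otimes \ell_p$ corresponds under the identification to the $E$-valued sequences with finite support, its closure in the $\|\cdot\|_{p,\mathrm{mid}}$-norm is exactly the closure of $c_{00}(E)$ in $\ell_p^{\rm mid}(E)$. One then checks that $\ell_p^{\rm mid}$ is finitely shrinking (noted in the preceding example) so that $(\ell_p^{\rm mid})^u(E)$ is defined, and that the closure of $c_{00}(E)$ in $\ell_p^{\rm mid}(E)$ coincides with $(\ell_p^{\rm mid})^u(E)$: a sequence $w \in \ell_p^{\rm mid}(E)$ lies in this closure iff its tails $(w_n, w_{n+1}, \dots)$ go to zero in the mid-norm, which is the defining condition of $(\ell_p^{\rm mid})^u$. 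The main obstacle I anticipate is the collapsing-of-the-supremum computation in the middle paragraph: getting the double pairing (over $n$ and over the $\ell_p$-coordinate $m$) to match the single $\ell_p(\ell_p)$-duality cleanly, with all the finite sums over representations $j$ handled correctly and the suprema taken in the right order, is the delicate part; everything else is either routine completion/closure bookkeeping or a direct appeal to the stated results.
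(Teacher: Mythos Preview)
Your approach is essentially the paper's: define the bilinear map $J(x,(a_k)) = (a_k x)_k$, linearize, show $\|J_L(u)\|_{p,\mathrm{mid}} = \alpha_{\ell_p}(u)$ by collapsing the supremum over $B_{\ell_{p^*}(\ell_{p^*})}$ via the duality $\ell_p(\ell_p)' \stackrel{1}{=} \ell_{p^*}(\ell_{p^*})$, then extend to completions and invoke density. One slip to fix: the image of $E \otimes \ell_p$ under $J_L$ is \emph{not} the finitely supported sequences $c_{00}(E)$ --- that is the image of $E \otimes c_{00}$ --- so your parenthetical identification and the sentence ``$E \otimes \ell_p$ corresponds under the identification to the $E$-valued sequences with finite support'' are incorrect as written; nonetheless the argument survives because $E \otimes c_{00}$ is $\alpha_{\ell_p}$-dense in $E \otimes \ell_p$ (reasonable crossnorm plus density of $c_{00}$ in $\ell_p$), so checking the isometry on $c_{00}(E)$ and then identifying its $\|\cdot\|_{p,\mathrm{mid}}$-closure with $(\ell_p^{\rm mid})^u(E)$ is legitimate.
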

\begin{proof}
	Considering the operator 
	\[J \colon E \times \ell_p \to (\ell_p^{\rm mid})^u(E)\ , \ (x,(a_k)^\infty_{k=1})\mapsto (a_kx)^\infty_{k=1},\]
	we have
	\begin{align*}
		\nonumber \|J((x,(a_k)^\infty_{k=1}))\|_{p, \rm mid} &= \sup_{(\varphi_n)^\infty_{n=1} \in B_{\ell_p^w(E')}}\left(\sum_{k=1}^\infty \sum_{n=1}^\infty |\varphi_n(a_kx)|^p \right)^{\frac{1}{p}}\\
		\nonumber&= \sup_{(\varphi_n)^\infty_{n=1} \in B_{\ell_p^w(E')}}\left(\sum_{k=1}^\infty|a_k|^p \sum_{n=1}^\infty |\varphi_n(x)|^p \right)^{\frac{1}{p}}\\
		\nonumber&= \|(a_k)^\infty_{k=1}\|_p\sup_{(\varphi_n)^\infty_{n=1} \in B_{\ell_p^w(E')}}\left(\sum_{n=1}^\infty |\varphi_n(x)|^p \right)^{\frac{1}{p}}\\
		&=\|(a_k)^\infty_{k=1}\|_p\|x\cdot e_1\|_{p, \rm mid}  = \|(a_k)^\infty_{k=1}\|_p\|x\| < \infty
	\end{align*}
	and as
	$$
	\lim_{n \to \infty}\|(a_kx)^\infty_{k=n}\|_{p, \rm mid} = \lim_{n \to \infty}\|(a_k)^\infty_{k=n}\|_p\|x\| = 0
	$$
	it follows that $J$ is well-defined. It is easy to prove that $J$ is  bilinear and its continuity follows from above calculus. We prove now that the linearization 
	\begin{equation*}
		J_L \colon E \otimes_{\alpha_{\ell_p}^{ }} \ell_p \to (\ell_p^{\rm mid})^u(E)\ ,\ 
		u = \sum_{i=1}^{m}x_i \otimes a_i \mapsto \left(\sum_{i=1}^{m}a_{i_k}x_i \right)^\infty_{k=1}
	\end{equation*}
	of $J$ is an isometry, where $\alpha_{\ell_p}^{ }$ is given by
	$$
	\alpha_{\ell_p}^{ }(u)= \sup\left\lbrace \left|\sum_{j=1}^{k}\sum_{n=1}^{\infty}\varphi_{n}(x_{j})\psi_{n}(y_{j}) \right|: (\varphi_{n})_{n=1}^{\infty}\in B_{\ell_p^w(E')}, (\psi_{n})_{n=1}^{\infty} \in B_{\ell_{p^*}(\ell_{p^*})} \right\rbrace.
	$$
	Indeed, from the well-definition of $J_L$ and the definition of $\ell_p^{\rm mid}(E)$, for any $u = \sum_{i=1}^{m}x_i \otimes a_i \in E \otimes_{\alpha_{\ell_p}^{ }} \ell_p$, we have
	$$
	\left( \left(\sum_{i=1}^{m}a_{i_k}\varphi_n(x_i) \right)^\infty_{n=1} \right)^\infty_{k=1} \in \ell_p(\ell_p), 
	$$
	for all $(\varphi_n)^\infty_{n=1} \in \ell_p^w(E')$. Thus,
	\begin{align*}
		\|J_L(u)\|_{p, \rm mid} &= 
		 \sup_{(\varphi_n)^\infty_{n=1} \in B_{\ell_p^w(E')}} \left\|\left( \left(\sum_{i=1}^{m}a_{i_k}\varphi_n(x_i) \right)^\infty_{n=1} \right)^\infty_{k=1} \right\|_p\\
		&=\sup_{(\varphi_n)^\infty_{n=1} \in B_{\ell_p^w(E')}} \left\|\left( \left(\sum_{i=1}^{m}a_{i_k}\varphi_n(x_i) \right)^\infty_{k=1} \right)^\infty_{n=1} \right\|_p\\
		&=\sup_{(\varphi_n)^\infty_{n=1} \in B_{\ell_p^w(E')}} \sup_{(b_n)^\infty_{n=1} \in B_{\ell_{p*}(\ell_{p*})}}\left|\sum_{n=1}^\infty b_n\left(\left(\sum_{i=1}^{m}a_{i_k}\varphi_n(x_i) \right)^\infty_{k=1} \right)  \right|\\
		&=\sup_{(\varphi_n)^\infty_{n=1} \in B_{\ell_p^w(E')}} \sup_{(b_n)^\infty_{n=1} \in B_{\ell_{p*}(\ell_{p*})}}\left|\sum_{n=1}^\infty\sum_{k=1}^\infty b_{n_k}\left(\sum_{i=1}^{m}a_{i_k}\varphi_n(x_i) \right)  \right|\\
		&=\sup_{(\varphi_n)^\infty_{n=1} \in B_{\ell_p^w(E')}} \sup_{(b_n)^\infty_{n=1} \in B_{\ell_{p*}(\ell_{p*})}}\left|\sum_{i=1}^{m}\sum_{n=1}^\infty  \varphi_n(x_i) \left(\sum_{k=1}^\infty  b_{n_k}a_{i_k}\right) \right|\\
		&=\sup_{(\varphi_n)^\infty_{n=1} \in B_{\ell_p^w(E')}} \sup_{(b_n)^\infty_{n=1} \in B_{\ell_{p*}(\ell_{p*})}}\left|\sum_{i=1}^{m}\sum_{n=1}^\infty  \varphi_n(x_i) b_n(a_i)   \right|
		= \alpha_{\ell_p}^{ }(u)
	\end{align*}
	and $J_L$ is an isometry and extends to an isometry $J_L : E \widehat{\otimes}_{\alpha_{\ell_p}^{ }} \ell_p \to (\ell_p^{\rm mid})^u(E)$. Furthermore, it is easy to verify that $J_L(E \otimes \ell_p)$ is a dense subspace in $(\ell_p^{\rm mid})^u(E)$ and, by the definition of completion, we can conclude that $J_L$ is an isometric isomorphism.
\end{proof}

As far as we know, the class of sequences $\ell_p^w$ is not a dual class for any class $X$ if $p=1$. For the other choices of $1 < p < \infty$, as $\ell_p^w$ is dual for $\ell_{p^*}\langle \cdot \rangle$ and $\ell_{p^*}(\cdot)$ is dual for $\ell_p(\cdot)$, the norm $\alpha_{\ell_p}^{ }$ can be interpreted as an $\alpha_{\ell_p}'$ norm and we immediately obtain the following

\begin{corollary} The isometric isomorphisms 
	\[((\ell_p^{\rm mid})^u(E))'\stackrel{1}{=}( E \widehat{\otimes}_{\alpha_{\ell_p}'} \ell_p)' \stackrel{1}{=} \mathcal{B}_{I,\ell_p}(E, F)\]
 hold for $1<p<\infty$. 
\end{corollary}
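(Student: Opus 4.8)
The plan is to obtain the stated chain by gluing together three identifications, two of which are already in hand. The leftmost isomorphism $((\ell_p^{\rm mid})^u(E))'\stackrel{1}{=}( E \widehat{\otimes}_{\alpha_{\ell_p}^{}} \ell_p)'$ is immediate from the preceding theorem, which gives the isometric isomorphism $(\ell_p^{\rm mid})^u(E) \stackrel{1}{=} E \widehat{\otimes}_{\alpha_{\ell_p}^{}} \ell_p$: taking adjoints of an isometric isomorphism of Banach spaces produces an isometric isomorphism of their duals. So the only real task is to identify $( E \widehat{\otimes}_{\alpha_{\ell_p}^{}} \ell_p)'$ with the space of $\ell_p$-integral bilinear forms on $E\times\ell_p$.

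The key observation is that, for $1<p<\infty$, the norm $\alpha_{\ell_p}^{}$ used in the theorem — built from the classes $X=\ell_p^w$ and $Y=\ell_{p^*}(\cdot)$ in the slots $B_{X(E')}$, $B_{Y(F')}$ — is in fact an $\alpha_{\ell_p}'$ norm. Indeed, by Example \ref{exduals} b) (with $p$ replaced by $p^*$) the class $\ell_p^w$ is a dual class for $\ell_{p^*}\langle\cdot\rangle$, and by Example \ref{exduals} a) the class $\ell_{p^*}(\cdot)$ is a dual class for $\ell_p(\cdot)$. Hence, taking as base classes $\ell_{p^*}\langle\cdot\rangle$ and $\ell_p(\cdot)$, whose dual classes are respectively $\ell_p^w$ and $\ell_{p^*}(\cdot)$, the defining supremum of $\alpha_{\ell_p}'$ coincides verbatim with that of $\alpha_{\ell_p}^{}$, so $E \widehat{\otimes}_{\alpha_{\ell_p}^{}} \ell_p = E \widehat{\otimes}_{\alpha_{\ell_p}'} \ell_p$ isometrically. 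This is exactly the reinterpretation discussed right before Section 5, and it is precisely here that the restriction $1<p<\infty$ is needed: for $p=1$ the class $\ell_1^w$ is not a dual class for any sequence class, so this step is unavailable.

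Once $\alpha_{\ell_p}^{}$ has been recognized as an $\alpha_{\ell_p}'$ norm, I would invoke Theorem \ref{teo.dual.gxy} together with the identity $(E\widehat{\otimes}_{\alpha_\lambda'}F)'\stackrel{1}{=}\mathcal{B}_{I,\lambda}(E,F)$ recorded just after it, applied with $\lambda=\ell_p$ and second factor $\ell_p$, to get $(E \widehat{\otimes}_{\alpha_{\ell_p}'} \ell_p)'\stackrel{1}{=}\mathcal{B}_{I,\ell_p}(E,\ell_p)$. Composing this with the two isometries above yields the displayed chain. The only point demanding a moment's care is verifying that the hypotheses of the $\alpha_\lambda'$-machinery are satisfied for this choice, namely $\ell_p^w(\mathbb{K})=\ell_p\stackrel{1}{\hookrightarrow}\ell_p$ and $\ell_{p^*}(\mathbb{K})=\ell_{p^*}\stackrel{1}{\hookrightarrow}(\ell_p)_*=\ell_{p^*}$, so that $\alpha_{\ell_p}'$ is a reasonable crossnorm — but this is already recorded in Example \ref{exnormas} a) and is in any case routine, so there is no genuine obstacle; the corollary is essentially a bookkeeping consequence of the preceding theorem and Theorem \ref{teo.dual.gxy}.
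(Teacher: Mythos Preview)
Your argument is correct and matches the paper's own justification essentially verbatim: the paper notes that for $1<p<\infty$ the classes $\ell_p^w$ and $\ell_{p^*}(\cdot)$ are dual classes for $\ell_{p^*}\langle\cdot\rangle$ and $\ell_p(\cdot)$ respectively, so the $\alpha_{\ell_p}^{}$ of the preceding theorem is an $\alpha_{\ell_p}'$ norm, and the corollary then follows immediately from that theorem together with Theorem~\ref{teo.dual.gxy}. You have also correctly caught that the second factor should be $\ell_p$ rather than a generic $F$.
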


\medskip

\noindent Departamento de Ci\^{e}ncias Exatas,
Universidade Federal da Para\'iba,
58.297-000 -- Rio Tinto, Brazil. Departamento de Matem\'atica,
Universidade Federal da Para\'iba,
58.051-900 -- Jo\~ao Pessoa, Brazil. e-mails: jamilson@dcx.ufpb.br, jamilsonrc@gmail.com\\

\noindent Instituto Federal de Educa\c c\~ao, Ci\^encia e Tecnologia do Cear\'a (IFCE),
63.475-000 -- Jaguaribe, Brazil. e-mail: lucas.carvalho@ifce.edu.br\\

\noindent Departamento de Matem\'atica,
Universidade Federal da Para\'iba,
58.051-900 -- Jo\~ao Pessoa, Brazil.
e-mail: lfpinhosousa@gmail.com

\end{document}